 \numberwithin{equation}{section}
\theoremstyle{definition}
\newtheorem{example}{Example}[section]
\newtheorem{definition}[example]{Definition}
\newtheorem{remark}[example]{Remark}
\theoremstyle{plain}
\newtheorem{theorem}[example]{Theorem}
\newtheorem{lemma}[example]{Lemma}
\newtheorem{corollary}[example]{Corollary}
\newcommand{\R}{{\mathbb R}}
\newcommand{\N}{{\mathbb N}}
\newcommand{\ups}{\Upsilon_{{\cal R}}^p(\R^{m\times n})}
\newcommand{\upss}{\Upsilon_{{\cal S}}^p(\R^{m\times n})}
\newcommand{\be}{\begin{eqnarray}}
\newcommand{\coms}{{\beta_{{\cal S}}\R^{m\times n}}}
\newcommand{\ee}{\end{eqnarray}}
\renewcommand{\d}{{\rm d}}
\newcommand{\cdm}{{\cal DM}^p_{\cal R}(\O;\R^{m\times n})}
\newcommand{\cdms}{{\cal DM}^p_{\cal S}(\O;\R^{m\times n})}
\newcommand{\gcdm}{{\cal GDM}^p_{\cal R}(\O;\R^{m\times n})}
\newcommand{\dm}{{\cal DM}^p_{\cal R}(\O;\R^{m\times n})}
\newcommand{\md}{{\rm d}}
\renewcommand{\O}{\Omega}
\newcommand{\s}{\sigma}
\renewcommand{\b}{\beta}
\newcommand{\Rn}{\R^{n}}
\newcommand{\wto}{\rightharpoonup}
\newcommand{\rca}{{\rm rca}}
\newcommand{\prca}{{\rm rca}_{\vspace*{.5mm}1}^+}
\newcommand{\cN}{{\mathcal N}}
\renewcommand{\div}{\operatorname{div}}
\newcommand{\abs}[1]{\left\vert#1\right\vert}
\newcommand{\eps}{\varepsilon}
\title{Sequential weak continuity of  null Lagrangians at the boundary \thanks{This work  was   supported by the grants P201/12/0671 and   P201/10/0357 (GA \v{C}R).
}}
\author{ {Agnieszka} Ka\l amajska\thanks{Institute of Mathematics, Warsaw University, ul.~Banacha 2, PL-02-097 Warsaw, Poland. The work is supported by the  Polish Ministry of
Science grant no. N N201 397837 (years 2009-2012). }\hspace{3mm}
Stefan Kr\"omer\thanks{Mathematisches Institut, Universit\"at zu K\"oln, 50923 K\"oln, Germany} \hspace{3mm} Martin
Kru\v{z}\'{\i}k\thanks{Institute of Information Theory and Automation of the ASCR, Pod vod\'{a}renskou
v\v{e}\v{z}\'{\i}~4, CZ-182~08~Praha~8, Czech Republic (corresponding
address) \& Faculty of Civil Engineering, Czech Technical
University, Th\'{a}kurova 7, CZ-166~ 29~Praha~6, Czech Republic  ({\tt
kruzik@utia.cas.cz})} }
\begin{document}
\date{}
\maketitle

\bigskip
%\noindent

\bigskip

\begin{abstract}
We show weak* in measures on $\bar\O$/ weak-$L^1$ sequential continuity of $u\mapsto f(x,\nabla u):W^{1,p}(\O;\R^m)\to L^1(\O)$, where
$f(x,\cdot)$ is a null Lagrangian for $x\in\O$,  it is a null Lagrangian at the boundary for $x\in\partial\O$ and $|f(x,A)|\le C(1+|A|^p)$. We also give a precise characterization
of null Lagrangians at the boundary in arbitrary dimensions. Our results explain, for instance,  why $u\mapsto \det\nabla u:W^{1,n}(\O;\R^n)\to L^1(\O)$ fails to be weakly continuous. Further, we state a new weak lower semicontinuity theorem for integrands depending on null Lagrangians at the boundary. The paper closes with an example indicating that a well-known result on higher integrability of determinant \cite{Mue89a} need not  necessarily extend to our setting.  The notion of  quasiconvexity at the boundary due to J.M.~Ball and J.~Marsden is central to our analysis.
\end{abstract}

\medskip

%\noindent
{\bf Key Words:}
 Bounded sequences  of gradients, concentrations, oscillations, quasiconvexity, weak convergence.
\medskip

%\noindent
{\bf AMS Subject Classification.}
 49J45, 35B05

%\tableofcontents

\section{Introduction}
This paper is inspired  by the well-known example  \cite[Example 7.3]{BaMu91} or   \cite[Example 8.6]{Da89B} showing that if $\O\subset\R^2$ is bounded and Lipschitz and $\{u_k\}\subset W^{1,2}(\O;\R^2)$ weakly converges to the origin then, in general, $\int_\O\det\nabla u_k(x)\,\md x\not\to 0$  which means that  $\det\nabla u_k\not\wto 0$ in $L^1(\O)$, neither $\det\nabla u_k\stackrel{*}{\wto} 0$ in $\rca(\bar\O)$ (Radon measures on $\bar\O$).  Contrary to that, if the sequence were bounded in $W^{1,p}(\O;\R^2)$ for $p>2$ then $\{\det\nabla u_k\}_{k\in\N}$ would weakly tend to zero in $L^1(\O)$.  Therefore, a natural question arises which  functions $f:\R^{m\times n}\to\R$, $|f(A)|\le C(1+|A|^p)$, have the property that $u\mapsto f(\nabla u)$ is (weakly,weakly*) sequentially continuous as maps from $W^{1,p}(\O;\R^m)$ to $\rca(\bar\O)$, $p>1$. It is obvious that such functions must be  quasiaffine, i.e., $f$ is an affine function of all subdeterminants of its argument \cite{Da89B}, however, as the above mentioned example shows, it is far from being sufficient.  It turns out that this question  is intimately related to concentrations of $\{|\nabla u_k|^p\}_{k\in\N}\subset L^1(\O)$ at the boundary of $\O$ and that, for a general domain $\O$, $f$ must also depend on $x\in\O$.  We also show that the notion of quasiconvexity at the boundary, introduced in \cite{BaMa84a} to study necessary  conditions for local minimizers of variational integral functionals  plays a key role in our analysis.

The plan of the paper is as follows. After introducing necessary notation we recall the notions of quasiconvexity and quasiconvexity at the boundary. Then we explicitly characterize all functions which, together with their negative multiple, are quasiconvex at the boundary. These are here called null Lagrangians at the boundary. Our characterization is a slight  adaptation of the result of P.~Sprenger \cite{Spre96B} which does not seems  to be  well-known to the calculus-of-variations  community.  We state our main result Theorem~\ref{th:weakcontinuityup} using a recently discovered characterization of DiPerna-Majda measures generated by gradients and get a new weak lower semicontinuity result for integral functionals depending on  null Lagrangians at the boundary. Finally, we construct an example indicating  that a result analogous to higher integrability of determinants due to M\"{u}ller \cite{Mue89a} may not hold for null Lagrangians at the boundary.

\bigskip

\section{Basic notation.}
Let us  start with a few definitions and with the explanation of our notation.
Having a bounded domain $\O\subset\R^n$ we denote by $C(\O)$ the space of continuous functions from $\O$ to $\R$.  Then $C_0(\O)$ consists of  functions from $C(\O)$ whose support is contained in $\O$.
 More generally, for any topological space $S$, by $C(S)$ we denote all continuous functions on $S$.
In what follows ``{\rm rca}$(S)$'' denotes the set of regular countably additive set functions on the Borel $\s$-algebra on a metrizable set  $S$ (cf. \cite{d-s}), its subset, {\rm rca}$^+_1(S)$,  denotes regular  probability measures on a set $S$.
We write ``$\gamma$-almost all'' or ``$\gamma$-a.e.'' if  we mean ``up to a set with the $\gamma$-measure zero''. If $\gamma$ is the $n$-dimensional Lebesgue measure and $M\subset\R^n$ we omit writing $\gamma$ in the notation.
Further, $W^{1,p}(\O;\R^m)$, $1\le p<+\infty$  denotes the usual space of measurable mappings which are together with
their first (distributional) derivatives integrable with the $p$-th power.
The support of a measure $\sigma\in\ {\rm rca}(\O)$ is a smallest closed set $S$
such that $\sigma(A)=0$ if $S\cap A=\emptyset$. Finally, if $\sigma\in\rca(S)$ we write
$\sigma_s$ and $d_\sigma$ for the singular part and density  of $\sigma$ defined by   the Lebesgue decomposition, respectively. We denote by `w-$\lim$' the weak limit and by $B(x_0,r)$ an open ball in $\R^n$ centered at $x_0$ and the radius $r>0$. The scalar product on $\R^n$ is standardly defined as $a\cdot b:=\sum_{i=1}^na_ib_i$ and analogously on $\R^{m\times n}$. Finally, if $a\in\R^m$ and $b\in\R^n$ then  $a\otimes b\in\R^{m\times n}$ with $(a\otimes b)_{ij}=a_ib_j$, and $\mathbb{I}$ denotes the identity matrix.
%If not explicitely stated otherwise, we will suppose in the sequel  that
%$\O\subset\R^n$ is a bounded domain with a $C^1$ boundary.
\bigskip

\subsection{Quasiconvex functions}
\noindent
Let $\O\subset\R^n$ be a bounded domain. We say that  a function $v:\R^{m\times n}\to\R$ is quasiconvex \cite{Mo52a} if
for any $F\in\R^{m\times n}$ and any $\varphi\in W^{1,\infty}_0(\O;\R^m)$
\be\label{quasiconvexity}
v(F)|\O|\le \int_\O v(F+\nabla \varphi(x))\,\md x\ .\ee
If $v:\R^{m\times n}\to\R$ is not quasiconvex we define its quasiconvex envelope
$Qv:\R^{m\times n}\to\R$ as
$$
Qv=\sup\left\{h\le v;\ \mbox{$h:\R^{m\times n}\to\R$ quasiconvex }\right\}\ $$
and if the set on the right-hand side is empty we put $Qv=-\infty$.
If $v$ is locally bounded and Borel measurable then for any $F\in\R^{m\times n}$ (see \cite{Da89B})
\be\label{relaxation}
Qv(F)=\inf_{\varphi\in W^{1,\infty}_0(\O;\R^m)} \frac{1}{|\O|} \int_\O v(F+\nabla \varphi(x))\,\md x\ .\ee

We will also need the following  elementary  result. It can be found in a more general form   e.g. in \cite[Ch.~4, Lemma~2.2]{Da89B}  or in \cite{Mo52a}.

\begin{lemma}\label{lemma}  Let $v:\R^{m\times n}\to\R$ be quasiconvex  with  $|v(F)|\le C(1+|F|^p)$, $C>0$, for all $F\in\R^{m\times n}$.
Then there is a constant $\alpha\ge 0$ such that  for every  $F_1,F_2\in\R^{m\times n}$ it holds
\be\label{p-lipschitz-gen}
|v(F_1)-v(F_2)|\le \alpha(1+|F_1|^{p-1}+ |F_2|^{p-1})|F_1-F_2|\ .\ee
\end{lemma}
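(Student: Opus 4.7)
My plan is to derive the estimate by reducing to a one-dimensional convexity argument and then telescoping across matrix entries. The core fact I will use is that quasiconvexity implies rank-one convexity: for any $F\in\R^{m\times n}$, any rank-one matrix $a\otimes b$, and any $t_1,t_2,t_3\in\R$ with $t_1<t_2<t_3$, the function $t\mapsto v(F+t\,a\otimes b)$ is convex on $\R$. In particular, choosing $a\otimes b=e_i\otimes e_j$ shows that $v$ is separately convex in each of its $mn$ scalar entries.

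The one-dimensional step is as follows. Suppose $g:\R\to\R$ is convex with $|g(t)|\le C(1+|t|^p)$ and fix $t_1\ne t_2$. Set $R:=1+|t_1|+|t_2|$ and introduce the auxiliary points $t_0:=\min(t_1,t_2)-R$ and $t_3:=\max(t_1,t_2)+R$. The standard monotonicity of difference quotients for convex functions,
\[
\frac{g(t_0)-g(\min(t_1,t_2))}{t_0-\min(t_1,t_2)}\le \frac{g(t_2)-g(t_1)}{t_2-t_1}\le\frac{g(t_3)-g(\max(t_1,t_2))}{t_3-\max(t_1,t_2)},
\]
combined with the $p$-growth hypothesis and the bounds $|t_0|,|t_3|\le 1+2|t_1|+2|t_2|$, yields
\[
\left|\frac{g(t_2)-g(t_1)}{t_2-t_1}\right|\le \alpha_0\bigl(1+|t_1|^{p-1}+|t_2|^{p-1}\bigr),
\]
for some constant $\alpha_0=\alpha_0(C,p)$, after dividing by $R$ and using $(1+|t_1|+|t_2|)^{p-1}\le c_p(1+|t_1|^{p-1}+|t_2|^{p-1})$.

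To assemble the matrix estimate, I connect $F_1$ and $F_2$ by a path that changes one entry at a time. Fix an enumeration of the index pairs $(i,j)$ and set
\[
G_0:=F_1,\qquad G_k:=G_{k-1}+\bigl((F_2)_{i_kj_k}-(F_1)_{i_kj_k}\bigr)e_{i_k}\otimes e_{j_k},
\]
so that $G_{mn}=F_2$ and consecutive $G_k$'s differ by a single scalar in one entry, hence lie on a rank-one line. Apply the one-dimensional bound to $g_k(t):=v(G_{k-1}+t\,e_{i_k}\otimes e_{j_k})$, observing that $|G_k|\le |F_1|+|F_2|$ throughout the path. Summing the $mn$ telescoping increments and using $\sum_k|(F_2-F_1)_{i_kj_k}|\le\sqrt{mn}\,|F_1-F_2|$ gives \eqref{p-lipschitz-gen} with $\alpha$ depending on $C,p,m,n$.

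The only genuine obstacle is verifying that the $p$-growth of $v$ transfers to each of the scalar slices $g_k$ with a constant independent of the other entries; this is immediate since each $G_{k-1}+t\,e_{i_k}\otimes e_{j_k}$ has norm at most $|F_1|+|F_2|+|t|$, so the growth constant along the slice depends only on $C$ and on $|F_1|+|F_2|$, and this dependence is absorbed into the final bound precisely through the factor $1+|F_1|^{p-1}+|F_2|^{p-1}$.
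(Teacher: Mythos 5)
Your overall strategy — pass to rank-one (hence separate) convexity, prove a one-dimensional Lipschitz estimate for convex functions with $p$-growth, and telescope one matrix entry at a time — is the standard argument for this result; the paper itself offers no proof and only cites Dacorogna \cite{Da89B}, whose proof follows this very route. However, the way you bridge the one-dimensional lemma to the matrix slices does not quite close, and the last paragraph of your proposal contains a genuine gap.

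Your one-dimensional lemma is stated with growth $|g(t)|\le C(1+|t|^p)$ and with the auxiliary length scale $R=1+|t_1|+|t_2|$. When you apply it to the slice $g_k(t):=v(G_{k-1}+t\,e_{i_k}\otimes e_{j_k})$ with $t_1=0$ and $t_2=\delta:=(F_2-F_1)_{i_kj_k}$, the growth of $g_k$ is $|g_k(t)|\le C\bigl(1+(\,|G_{k-1}|+|t|\,)^p\bigr)$, i.e.\ the effective constant in front is $C(1+M^p)$ with $M:=|F_1|+|F_2|$. Plugged into your lemma as stated, the output is of order $(1+M^p)(1+|\delta|^{p-1})|\delta|$, not $(1+M^{p-1})|\delta|$; these differ by a full power of $M$. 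For instance, take $|F_1|$ large and $F_2=F_1+\eps E$ with $E$ a fixed unit rank-one matrix: the target bound is of order $|F_1|^{p-1}\eps$, whereas your bound is of order $|F_1|^p\eps$. So the claim that the $M$-dependence ``is absorbed into the final bound precisely through the factor $1+|F_1|^{p-1}+|F_2|^{p-1}$'' is not correct as written. The fix is easy but necessary: in the difference-quotient argument for the slice, choose $R=1+|G_{k-1}|+|t_1|+|t_2|\sim 1+M$ rather than $R=1+|t_1|+|t_2|$, so that $g_k$ is bounded by $\lesssim C(1+M^p)$ on the enlarged interval $[t_1-R,\,t_2+R]$ and the difference quotient is therefore $\lesssim C(1+M^p)/(1+M)\lesssim C(1+M^{p-1})\lesssim C(1+|F_1|^{p-1}+|F_2|^{p-1})$. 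Equivalently, state the one-dimensional estimate in its flexible form — a convex function bounded by $K$ on an interval of radius $R$ has Lipschitz constant $\lesssim K/R$ on the concentric interval of radius $R/2$ — and apply it with $K\sim 1+M^p$, $R\sim 1+M$. With this adjustment your telescoping then yields \eqref{p-lipschitz-gen}.
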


\bigskip

Following \cite{BaMa84a,Sil97B, Spre96B} we define the notion of  quasiconvexity at the boundary. In order to proceed, we first define the so-called {\it standard boundary domain}.

\begin{definition}\label{def:stabounddom}
Let $\varrho\in\R^n$ be a unit vector and let $\O_\varrho\subset \R^n$ be a bounded  Lipschitz  domain. We say that
$\O_\varrho$ is a standard boundary domain with the normal $\varrho$ if there is $a\in\R$ such that $\O_\varrho\subset H_{a,\varrho}:=\{x\in\R^n;\ \varrho\cdot x<a\}$ and the $(n-1)$- dimensional interior ${\Gamma_\varrho}$ of ${\partial\O_\varrho\cap\partial H_{a,\varrho}}$ is not empty.
\end{definition}

%\red{
%\begin{rem}\rm Note that after the rotation of coordinates such that $A_\rho =(1,0,\dots )$, $H_{a,\rho}=\{ x\in \R^n : x_1<0\}$. In particular $\partial\Omega$ must contain ``flat part'' of the boundary with the normal $\rho$.
%\end{rem}
%}

\noindent

 For $1\le p\le+\infty$, and any bounded Lipschitz domain $\O$, we define
\be\label{testspace}
 W^{1,p}_{\partial\O\setminus\Gamma}(\O;\R^m):=\{ u\in W^{1,p}(\O;\R^m);\ u\equiv 0 \mbox{ on } \partial\O\setminus\Gamma\} ,\ee
where the condition $u\equiv 0$ is understood in the sense of operator of trace, in particular the equality holds ${\cal H}^{n-1}$-almost everywhere with respect to the
$n-1$-dimensional Hausdorff measure on $\partial\Omega$.

We are now ready to define the quasiconvexity at the boundary.

\begin{definition} (\cite{BaMa84a})\label{qcb-def}
Let $\varrho\in\R^n$ be a unit vector, and let $v:\R^{m\times n}\to\R$ be a given function.
\begin{description}
\item[i)]
$v$ is called quasiconvex at the boundary at $F\in\R^{m\times n}$ (where  $F\in\R^{m\times n}$ is given), with respect to $\varrho$ (shortly $v$ is qcb at $(F,\varrho)$), if there is $q\in\R^m$ such that {for every standard boundary domain $\O_\varrho$ with the normal $\varrho$}
and for every $u\in {W^{1,\infty}_{\partial\O_\varrho\setminus\Gamma_\varrho}(\O_\varrho;\R^m)}$, we have
\be\label{qcbinfty}
{\int_{\Gamma_\varrho} q\cdot u(x)\,\md S + v(F)|\O_\varrho|\le \int_{\O_\varrho} v(F+\nabla u(x))\,\md x\ .} \ee
\item[ii)]  $v$ is called quasiconvex at the boundary if it is  quasiconvex at the boundary at every $F\in\R^{m\times n}$ and every $\varrho\in\R^n$.
\end{description}
\end{definition}
\bigskip

\noindent
An immediate generalization of the above definition is the following one.

\bigskip

\begin{definition}\label{p-qcb-def}
Let $\varrho\in\R^n$ be a unit vector, $F\in\R^{m\times n}$, $1\le p<+\infty$, $v:\R^{m\times n}\to\R$ is such that $|v|\le C(1+|\cdot|^p)$ for some $C>0$.
\begin{description}
\item[i)]
A function $v$ is called $W^{1,p}$-quasiconvex at the boundary at given $F\in\R^{m\times n}$ with respect to $\varrho$ (shortly $v$ is $p$-qcb at $(F,\varrho)$), if there is $q\in\R^m$ such that {for every standard boundary domain $\O_\varrho$ with the normal $\varrho$}
and for every $u\in W^{1,p}_{\partial\O_\varrho\setminus \Gamma_\varrho}(\O_\varrho;\R^m )$ , we have
\be\label{qcb}
\int_{\Gamma_\varrho} q\cdot u(x)\,\md S + v(F)|\O_\varrho|\le \int_{\O_\varrho} v(F+\nabla u(x))\,\md x\ . \ee
\item[ii)] A function $v$ is called $W^{1,p}$-quasiconvex at the boundary if it is  $W^{1,p}$-quasiconvex at the boundary at every $F\in\R^{m\times n}$ and every $\varrho\in\R^n$.
\end{description}
\end{definition}

\noindent
Let us formulate several remarks, concerning the notation of functions quasiconvex at the boundary.

\begin{remark}\label{remark1}
\begin{description}
\item[(i)] If $v$ is differentiable {at $F$} then vector $q$ satisfying (\ref{qcbinfty}) is uniquely defined and
$q={\nabla v}(F)\varrho$,
 cf.~\cite{Spre96B}.
%Let us denote the set of all  vectors $q$ for which are admissible for (\ref{qcbinfty}) by
%$\partial_v^{\rm qcb}(F,\varrho)$. This set may be considered as a notion of a ``subdifferential'' for $v$. Then condition (ii) %from \cite[Th.~2.2]{BaMa84a}
%reads
%$0\in  \partial_v^{\rm qcb}(\nabla u(x_0),\varrho)$ where $x_0\in\partial\O$ and $\varrho$ is the outer unit normal to %$\partial\O$ at $x_0$. \Here $u\in W^{1,p}(\O;\R^m)$ is a local minimizer of class $C^1$ of the functional %$u\mapsto\int_\O v(\nabla u(x))\,\md x$.
\item[(ii)] It is clear that if $v$ is qcb at $(F,\varrho)$ it is also quasiconvex at $F$, i.e., (\ref{quasiconvexity}) holds.
\item[(iii)] If (\ref{qcbinfty}) holds for one standard boundary domain it holds for  other standard boundary domains {with the normal $\rho$}, too, \cite{BaMa84a}.
\item[(iv)] {If $p>1$,  $v:\R^{m\times n}\to\R$ is positively $p$-homogeneous, i.e.~$v(\lambda F)=\lambda^pv(F)$ for all $F\in\R^{m\times n}$, continuous, and   $p$-qcb at $(0,\varrho)$ then $q=0$ in (\ref{qcbinfty}).
 Indeed,
%\footnote{more direct proof}
as $v(t\theta)=t^pv(\theta)$, then $\partial_{\theta}v=\lim_{t\to 0}\frac{v(t\theta)-v(0)}{t}=\lim_{t\to 0} t^{p-1}v(\theta)=0$, whenever $\theta\in \R^{m\times n}$. Therefore
    $v$ is differentiable at $0$ with $\nabla v(0)=0$ and according to our Remark (i), $q=\nabla v(0)\rho=0$.
    Moreover, let us note that (\ref{qcbinfty})  implies that $\int_{\O_\varrho} v(\nabla u(x))\,\md x\ge0$.}
\item[(v)] Under the growth assumption $|v|\le C(1+|\cdot|^p)$ for some $1\le p<+\infty$ and $C>0$, $W^{1,p}$-quasiconvexity at the boundary is equivalent to the
the quasiconvexity at the boundary \cite{Kru10a}.
\item[(vi)] We refer an interested reader to \cite{GrMe07a,GrMe08a,MiSp98} for other applications of quasiconvexity at the boundary in variational context.
\end{description}
\end{remark}

\bigskip
\noindent
It will be convenient to define the following notion of quasiconvex at the boundary envelope of $v$ at zero. Note that we integrate only over a standard boundary domain with a given normal.

\begin{definition}\label{quasienvel}
{Let $\Omega_\varrho\subset \R^n$ be the standard boundary domain with the normal $\varrho\in\R^n$ of the unit length and let $\Gamma_\varrho$ be as in Definition \ref{def:stabounddom}. Let $v:\R^{m\times n}\to\R$ be continuous and positively $p$-homogeneous.
By the $W^{1,p}$-quasiconvex envelop at the boundary at $0$, we define the quantity:
\be\label{envelope}
Q_{b,\varrho}v(0):=\inf_{u\in W_{\O_\varrho\setminus\Gamma_\varrho}^{1,p}(\O_\varrho;\R^m)}\frac{1}{|\O_\varrho|}\int_{\O_\varrho}v(\nabla u(x))\,\md x\ .\ee
}
\end{definition}

\bigskip
\noindent
Below we state an example of function which is quasiconvex at the boundary.

\begin{example}
It is shown in \cite[Prop.~17.2.4]{Sil97B} that the function ${v:\R^{3\times 3}\to\R}$ given  by
$$
v(F):=a\cdot [{\rm Cof} F]\varrho\ $$
is quasiconvex at the boundary with the unit normal $\varrho\in\R^3$. Here $a\in\R^3$ is an arbitrary  constant and ``Cof'' is the cofactor matrix, i.e., $[{\rm Cof }F]_{ij}=(-1)^{i+j}{\rm det }F'_{ij}$, where ${F'_{ij}\in\R^{2\times 2}}$ is the submatrix of $F$ obtained from $F$ by removing the $i$-th row and the $j$-th column. Hence,  $v$ is positively $2$-homogeneous.  This particular function is also called an interface null Lagrangian in \cite{Sil2011}.
\end{example}

\subsection{Null Lagrangians at the boundary}

\begin{definition}\label{nulllagrangian}
Let $\varrho\in\R^n$ be a unit vector and let $v:\R^{m\times n}\to\R$ be a given function.
\begin{description}
\item[i)]
$v$ is called a {\it null Lagrangian at the boundary} at given $F\in \R^{m\times n}$
; cf.~\cite{Sil97B},
if  both  $v$ and $-v$ are quasiconvex at the boundary at  $F$,
i.e., there exists $q\in\R^m$ such that
{for every standard boundary domain $\O_\varrho$ with the normal $\varrho$} and for all $u\in W^{1,p}_{\partial\O_\varrho\setminus \Gamma_\varrho}(\O_\varrho;\R^m )$, we have
 \be\label{null-identity}
\int_{\Gamma_\varrho} q\cdot u(x)\,\md S + v(F)|\O_\varrho| = \int_{\O_\varrho} v(F+\nabla u(x))\,\md x\ . \ee
 \item[ii)] If $v$ is a {\it null Lagrangian at the boundary} at every $F\in \R^{m\times n}$, we call it a null Lagrangian at the boundary.
\end{description}
\end{definition}

\begin{definition}
Let  $\varrho\in\R^n$ be a unit vector. A mapping $\cN:\R^{m\times n}\to\R$ will be called a special null Lagrangian at the boundary at
$F\in\R^{m\times n}$, with respect to the normal $\varrho$ if for every  $W^{1,\infty}_{\partial\O_\varrho\setminus\Gamma_\varrho}(\O_\varrho;\R^m)$ it holds that
\be\label{null-identity1}
 \int_{\O_\varrho}\mathcal{N}(F+\nabla u(x))\,\md x=\mathcal{N}(F)|{\O_\varrho}|\ .
\ee
In particular, equation (\ref{null-identity}) holds with $q=0$.
\end{definition}
%\footnote{I added name ``special'', I think that the transformation does not help %as we deal with any $F$}

\begin{remark}
Given a fixed $F\in \R^{m\times n}$,
every null Lagrangian at the boundary at $F_0$ can be transformed into
a special null Lagrangian at the boundary at $F_0$ by adding a linear term.
More precisely, we have the following result:
If $\cN$ is a null Lagrangian at the boundary at $F$ with normal $\varrho$,
then $\cN$ is differentiable (in fact, it is a null Lagrangian and thus a polynomial), and according to Remark~\ref{remark1} (i), the vector $q$ in
\eqref{null-identity} is given by
$
	q=\frac{\partial}{\partial F}\cN(F_0)\varrho.
$
If we define $\tilde{\cN}:\R^{m\times n}\to \R$ by
$$
	\tilde{\cN}(F):=\cN(F)-\frac{\partial}{\partial F}\cN(F_0)\cdot F,~~F\in \R^{m\times n},
$$
we have that $\frac{\partial}{\partial F}\tilde{\cN}(F_0)=0$, and consequently, $\tilde{\cN}$ is
a special null Lagrangian at the boundary at $F_0$.

\end{remark}
In view of the previous remark,
the following theorem explicitly characterizes all possible null Lagrangians at the boundary. It was first proved by P.~Sprenger in his thesis \cite[Satz 1.27]{Spre96B}
written in German. We give here his original proof with some minor simplifications. Before stating the result we recall that
${\rm SO}(n):=\{R\in\R^{n\times n};\ R^\top R=RR^\top=\mathbb{I}\ ,\ \det R=1\}$ denotes the set of orientation-preserving rotations and if we write $A=(B|\varrho)$ for some $B\in\R^{n\times(n-1)}$ and $\varrho\in\R^n$ then $A\in\R^{n\times n}$, its last column is $\varrho$ and $A_{ij}=B_{ij}$ for $1\le i\le n$ and $1\le j\le n-1$.

\bigskip

\begin{theorem}\label{thm:bnulllag}
Let $\varrho\in\R^n$ be a unit vector
and let $\mathcal{N}:\R^{m\times n}\to\R$ be a given continuous function.
Then the following four statements are equivalent.
\begin{itemize}
\item[(i)] $\cN$ satisfies \eqref{null-identity1} for every $F\in \R^{m\times n}$;
\item[(ii)] $\cN$ satisfies \eqref{null-identity1} for $F=0$, i.e., $\cN$ is a special null Lagrangian at $0$;
\item[(iii)] There are constants $\tilde\beta_s\in\R^{{m\choose s}\times {{n-1}\choose s}}$, $1\le s\le\min(m,n-1)$, such that for all $H\in\R^{m\times n}$,
\be\label{null-form}
	\cN(H)=\cN(0)+\sum_{s=1}^{\min(m,n-1)}\tilde\beta_s\cdot {\rm ad}_s (H \tilde R),
\ee
where  $\tilde R\in\R^{n\times(n-1)}$ is a matrix such that $R=(\tilde R|\varrho)$ belongs to ${\rm SO}(n)$;
\item[(iv)] $\cN(F+a\otimes \varrho)=\cN(F)$ for every $F\in \R^{m\times n}$
and every $a\in \R^m$.
\end{itemize}
\end{theorem}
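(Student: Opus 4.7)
The plan is to prove the four-way equivalence through the cycle $(\mathrm{iii})\Rightarrow(\mathrm{i})\Rightarrow(\mathrm{ii})\Rightarrow(\mathrm{iii})$, supplemented by the direct equivalence $(\mathrm{iii})\Leftrightarrow(\mathrm{iv})$. By orthogonal invariance (the notion of standard boundary domain and the form in~(iii) both transform covariantly under rotations of $\R^n$) I may pass to coordinates in which $\varrho=e_n$, $\tilde R=(e_1|\dots|e_{n-1})$, and the flat part $\Gamma_\varrho$ of $\partial\O_\varrho$ lies in a hyperplane $\{x_n=\mathrm{const}\}$ with outward unit normal $e_n$.

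For $(\mathrm{iii})\Rightarrow(\mathrm{i})$ I would exploit the classical divergence structure of subdeterminants. For any index sets $I\subseteq\{1,\dots,m\}$ and $J\subseteq\{1,\dots,n-1\}$ of common cardinality $s$, a cofactor expansion writes
\[
{\rm ad}_s\!\bigl((F+\nabla u)\tilde R\bigr)_{IJ}-{\rm ad}_s(F\tilde R)_{IJ}=\sum_{k=1}^{s}\partial_{j_k}P_k(u,\nabla u,F),
\]
where each $P_k$ depends polynomially on a component $u_i$ with $i\in I$, on $F\tilde R$, and on lower-order minors of $\nabla u\,\tilde R$, and crucially each $j_k\in J$ satisfies $j_k<n$. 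Integrating over $\O_\varrho$ and applying the divergence theorem, the resulting boundary flux splits: on $\partial\O_\varrho\setminus\Gamma_\varrho$ it vanishes since $u\equiv 0$ there, while on $\Gamma_\varrho$ it vanishes since $\nu=e_n$ satisfies $\nu_{j_k}=0$ for every $j_k<n$. Summing over $s$ with the coefficients $\tilde\beta_s$ produces precisely \eqref{null-identity1}. The step $(\mathrm{i})\Rightarrow(\mathrm{ii})$ is then the trivial specialization to $F=0$.

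The bulk of the work is $(\mathrm{ii})\Rightarrow(\mathrm{iii})$, which I would split in two sub-steps. First, for any ball $B\subset\subset\O_\varrho$ the extension by zero of $v\in W^{1,\infty}_0(B;\R^m)$ is admissible in~(ii), so $\int_B\cN(\nabla v)\,\md x=\cN(0)|B|$. Hence $\cN$ is a classical null Lagrangian at~$0$, and the Morrey--Ball--Dacorogna structure theorem combined with continuity yields
\[
\cN(F)=\cN(0)+\sum_{s=1}^{\min(m,n)}\sum_{I,J}\beta_s^{IJ}\,{\rm ad}_s(F)_{IJ}.
\]
The delicate second sub-step is to force $\beta_s^{IJ}=0$ whenever $n\in J$. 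For each such triple $(s,I,J)$ I would construct a test function $u\in W^{1,\infty}_{\partial\O_\varrho\setminus\Gamma_\varrho}(\O_\varrho;\R^m)$ whose trace on $\Gamma_\varrho$ is tailored so that, through the divergence representation of minors, only the single $(I,J)$-contribution to $\int_{\O_\varrho}{\rm ad}_s(\nabla u)\,\md x$ is nonzero: contributions from index sets $J'\subseteq\{1,\dots,n-1\}$ vanish automatically by the argument of the previous paragraph, while the remaining $J'\ni n$ with $J'\neq J$ are rigged to cancel by a careful choice of the trace. Inserting this $u$ into~(ii) then isolates $\beta_s^{IJ}$ and forces it to be zero. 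The construction of these ``selector'' test functions --- activating exactly one targeted boundary minor at a time --- is the main technical obstacle of the proof.

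The remaining equivalence $(\mathrm{iii})\Leftrightarrow(\mathrm{iv})$ is essentially algebraic. The forward direction is immediate: since $\varrho$ is orthogonal to every column of $\tilde R$, $(F+a\otimes\varrho)\tilde R=F\tilde R$, so every polynomial in the minors of $H\tilde R$ is unchanged by additive perturbations in direction $\varrho$. For the converse, working inside the polynomial representation made available by $(\mathrm{ii})\Rightarrow(\mathrm{iii})$, one observes that $a\otimes\varrho$ affects only the $n$-th column of $F$, so~(iv) forces $\beta_s^{IJ}=0$ for every $J\ni n$, recovering exactly the form~(iii).
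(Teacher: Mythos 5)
Your overall structure is sound but follows a different cycle from the paper, which goes (ii) $\Rightarrow$ (iii) $\Rightarrow$ (iv) $\Rightarrow$ (i), with (i) $\Rightarrow$ (ii) trivial; in particular the paper never proves (iii) $\Rightarrow$ (i) directly. For (iv) $\Rightarrow$ (i) the paper uses a slick analytic argument: set $g(t):=\int_{\O_\varrho}\cN(F+t\nabla\varphi)\,\md x$, integrate by parts so that $g'(t)$ reduces to the boundary term $\int_{\Gamma_\varrho}\nabla_F\cN(F+t\nabla\varphi)\cdot(\varphi\otimes\varrho)\,\md S$ (using the Euler--Lagrange identity $\div\nabla_F\cN(F+\nabla\psi)=0$, valid because $\cN$ is an ordinary null Lagrangian), and then observe that (iv) forces $\nabla\cN(H)\cdot(a\otimes\varrho)\equiv0$, hence $g'\equiv0$. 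Your direct (iii) $\Rightarrow$ (i) via the divergence representation of minors --- all divergence directions lying in $\{1,\dots,n-1\}$, so the flux through $\Gamma_\varrho$ with normal $e_n$ vanishes --- is a valid and arguably more concrete alternative; the paper's route avoids unwinding the divergence form of each minor but leans on the Euler--Lagrange identity instead. Your (ii) $\Rightarrow$ (iii) matches the paper's in spirit.

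Two issues. First, your (iv) $\Rightarrow$ (iii) is circular as written: you invoke ``the polynomial representation made available by (ii) $\Rightarrow$ (iii),'' but when proving (iv) $\Rightarrow$ (iii) you may assume only (iv), and (iv) alone does not force $\cN$ to be a null Lagrangian (e.g.\ $\cN(F)=F_{11}^2$ with $\varrho=e_n$, $n\geq 2$ satisfies (iv) but is not of the form (iii)). The repair is to treat ``$\cN$ is an ordinary null Lagrangian'' as a standing hypothesis for the whole theorem --- the paper makes the same implicit assumption in its step (iv) $\Rightarrow$ (i), writing ``Since $\cN$ is null Lagrange $\dots$'' --- and then (iv) can indeed be read off coefficient-wise from the polynomial form. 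You should make that hypothesis explicit rather than borrowing it from an implication you have not yet earned. Second, your ``selector'' construction in (ii) $\Rightarrow$ (iii) overstates the difficulty. After integration by parts and using that one scalar component $\psi$ has arbitrary trace on $\Gamma_{e_n}$, the condition for a fixed row index set $(p)$ collapses to a pointwise identity of the form $\sum_{(q)}\beta_s^{(p)(q)}\det\nabla_{(q)\setminus\{n\}}\eta(y)=0$ on $\Gamma_{e_n}$, the sum running over $(q)$ with $q_s=n$. Choosing $\eta$ depending only on $y_{q_1},\dots,y_{q_{s-1}}$ for a target $(q)$ makes every competing summand vanish outright (a full column of zeros in the corresponding minor), with no cancellation to engineer; the isolation is therefore far cleaner than your ``rigged to cancel by a careful choice of trace'' suggests.
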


\begin{remark}
Condition (ii) is not a part of the statement of \cite[Satz 1.27]{Spre96B}, but it follows from the proof. On the other hand, we omitted a simple variant of (iv) in terms of the derivative of $\cN$ that was given by Sprenger.
\end{remark}

\begin{proof}[Proof of Theorem~\ref{thm:bnulllag}]
At first we note that the proof can be reduced to the case when $\varrho =e_n$ in the formulation of the statements.
Indeed, let
 $e_1,\ldots,e_n$ denote the standard unit vectors in $\R^n$ and observe that $R e_n=\varrho$, with $R$ defined in (iii).
Moreover, let $\O_{e_n}:=R^\top \O_\varrho$, $\Gamma_{e_n}:= R^\top \Gamma_\varrho$ and $\hat{\cN}(F):=\cN(F R^\top)$ for $F\in \R^{m\times n}$, and let
$\tilde{F}:= FR$. Note that $\hat{\cN}$ also is a null Lagrangian, and $\O_{e_n}$ is a standard boundary domain to the normal vector $e_n$.
By a change of variables, \eqref{null-identity} is equivalent to
\be\label{tbnl-1}
 \int_{\O_{e_n} }
 \hat{\cN}(\tilde{F}+\nabla u(x))\,\d x=\hat\cN (\tilde{F}) |\O_{e_n}|\
 ~~\text{for every $u\in W^{1,\infty}_{\partial\O_{e_n}\setminus\Gamma_{e_n}}(\O_{e_n};\R^m)$}.
\ee
An easy verification shows that (i)-(iv) defined for $\cN$ are equivalent to the analogous counterparts for $\hat{\cN}$.

As a consequence, it suffices to prove the assertion for the case $\cN=\hat{\cN}$, $\varrho=e_n$ and $\tilde{R}=(e_1|\ldots|e_{n-1})\in \R^{n\times (n-1)}$.
Moreover, as (i)--(iv) clearly remain unchanged if we add a constant to $\cN$, therefore we may assume that $\cN(0)=0$.

Since (i) obviously implies (ii), we only have to show that (ii) $\Rightarrow$ (iii) $\Rightarrow$ (iv) $\Rightarrow$ (i).

\noindent{\bf (ii) implies (iii):}\\
Since $\mathcal{N}$ is a null Lagrangian with $\cN(0)=0$, there are constants $\beta_s\in\R^{{m\choose s}\times {{n}\choose s}}$, $1\le s\le\min(m,n)$ such that
\be\label{null}
\mathcal N(H)=\sum_{s=1}^{\min(m,n)}\beta_s\cdot {\rm ad}_s H .
\ee
(see e.g. \cite{Da89B}).
By (ii), for every $\varphi\in W^{1,\infty}_{\partial\O_{e_n}\setminus \Gamma_{e_n}}(\O_{e_n};\R^m)$ we have that
$$
	0=\int_{\O_{e_n}}\mathcal{N}(\nabla\varphi(y))\,\d y,
$$
and using \eqref{null}, we infer that
$$
0=\sum_{s=1}^{\min(m,n)}\beta_s\cdot\int_{\O_{e_n}} {\rm ad}_s \nabla \varphi(y)\,\d y.
$$
As this must hold for all admissible mappings $\varphi$ and ${\rm ad}_s$ is positively homogeneous of degree $s$, by rescaling $\varphi$ it is easy to see that in fact,
\be \label{tbnl-2}
	0=\beta_s\cdot \int_{\O_{e_n}} {\rm ad}_s \nabla \varphi(y)\,\md y\quad\text{for each $s=1,\ldots,\min(m,n)$}.
\ee
Let $I^r_s$ denote the set of all $(\alpha):=\{\alpha_1,\ldots,\alpha_s\}\subset \N$ with $1\le\alpha_1<\alpha_2<\ldots<\alpha_s\le r$.
For $(p)\in I^m_s$ and $(q)\in I^n_s$ we write $\nabla_{(q)} \varphi_{(p)}(x)=\Big(\frac{\partial}{\partial x_{q_j}} \varphi_{p_k}(x)\Big)_{jk} \in \R^{s\times s}$.
With this notation and the divergence structure of determinants, the entries of ${\rm ad}_s$ are defined\footnote{The standard definition requires an additional factor $(-1)^{p+q}$, where $p$ and $q$ denote positions of $(p)$ and $(q)$ in an appropriate ordering of the elements of $I^m_s$ and $I^n_s$, respectively. However, as this factor plays no role in the proof (and could be absorbed into the corresponding constant, anyway), we omit it here.} as
$$
	{\rm ad}_s^{(p)(q)}\nabla \varphi=\det \nabla_{(q)} \varphi_{(p)}.
$$
If $s=1$, $(p)=\{p_1\}$ and $(q)=\{q_1\}$ for some integers $p_1,q_1$, and integration by parts in \eqref{tbnl-2} gives
\be \label{tbnl-2b}
	0=\sum_{p_1=1}^{m}\sum_{q_1=1}^{n}
	\beta_1^{(p)(q)}
	\int_{\Gamma_{e_n}} \varrho_{q_1} \varphi_{p_1}\,\d S,
\ee
where $\varrho$ is the outer normal to $\Gamma_{e_n}$ and $\varrho_{q_1}=\varrho\cdot e_{q_1}$.
In our case $\varrho\equiv e_n$ on $\Gamma_{e_n}$, the flat part of the boundary of $\O_{e_n}$, where $\varphi$ is not subject to a Dirichlet boundary condition.
Hence, all terms below the integral in \eqref{tbnl-2b} vanish unless $q_1=n$, and since $\varphi$ is arbitrary,
we get that
\be\label{tbnl-3}
	\beta^{(p)(q)}_1=0\qquad\text{for every $(p)\in I^m_1$ and $(q)=\{n\}$}.
\ee
In case $s\geq 2$, we can use the divergence structure of determinants as follows:
$$
	{\rm ad}_s^{(p)(q)}\nabla \varphi=\det \nabla_{(q)} \varphi_{(p)}=
	\sum_{i=1}^s \frac{d}{dx_i} \Big[(-1)^{i+s} \varphi_{p_s}\det \big(\nabla_{(q)\setminus \{q_i\}} \varphi_{(p)\setminus \{p_s\}}\big)\Big].
$$
Integrating by parts in \eqref{tbnl-2} yields that
\be \label{tbnl-4}
	\beta_s\cdot \int_{\O_{e_n}} {\rm ad}_s \nabla \varphi(y)\,\md y
	=\sum_{(p)\in I^m_s}\sum_{(q)\in I^n_s} \beta^{(p)(q)}_s\int_{\Gamma_{e_n}}\sum_{i=1}^s(-1)^{i+s}  \varrho_{q_i}
	\varphi_{p_s}
	\det \big(\nabla_{(q)\setminus \{q_i\}} \varphi_{(p)\setminus \{p_s\}}\big)\,\d S.
\ee
As  $\varrho=e_n$ on $\Gamma_{e_n}$,
the inner sum in \eqref{tbnl-4} only contributes if $i=s$ and $q_s=n$,
because otherwise, $q_i<n$ and thus $\varrho_{q_i}=0$.
Denoting
$$
	\bar I^n_{s}:=\{ (\alpha)=\{\alpha_1,\ldots,\alpha_{s}\}\subset \N\mid 1\le\alpha_1<\ldots<\alpha_{s}=n\},
$$
we can combine \eqref{tbnl-2} and \eqref{tbnl-4} to get
$$
	0
	=
	\sum_{(p)\in I^m_s}\sum_{(q)\in \bar I^n_{s}}
	\beta^{(p)(q)}_s\int_{\Gamma_{e_n}}\varphi_{p_s} \det \big(\nabla_{(q)\setminus\{n\}} \varphi_{(p)\setminus \{p_s\}}\big)\,\d S .
$$
As we are free to choose the vector-valued function $\varphi$ with arbitrary components vanishing, this implies that for every $(p)\in I^m_s$,
$$
	0
	=
	\sum_{(q)\in \bar I^n_{s}}
	\beta^{(p)(q)}_s\int_{\Gamma_{e_n}}\psi \det \big(\nabla_{(q)\setminus \{n\}} \eta\big)\,\d S,
$$
for every $\psi\in W^{1,\infty}_{\partial{\O_{e_n}}\setminus\Gamma_{e_n}}(\O_{e_n})$ and every $\eta\in W^{1,\infty}_{\partial{\O_{e_n}}\setminus\Gamma_{e_n}}(\O_{e_n};\R^{s-1})$, and since $\psi$ is arbitrary on $\Gamma_{e_n}$,
we get that	
$$
	0
	=
	\sum_{(q)\in \bar I^n_{s}}
	\beta^{(p)(q)}_s \det \nabla_{(q)\setminus\{n\}} \eta(y)\quad\text{for a.e.~$y\in \Gamma_{e_n}$ (with respect to the surface measure).}
$$
For any given $(q)\in \bar I^n_{s}$, it
is not difficult to find an admissible function $\eta(y)$ which, on some neighborhood of a point in $\Gamma_{e_n}$, only depends on $y_{q_1},\ldots,y_{q_{s-1}}$,
such that $\det \nabla_{(q)\setminus\{n\}} \eta\not\equiv 0$ on $\Gamma_{e_n}$. Together with \eqref{tbnl-3},
we conclude that for $s=1,\ldots,\min(m,n-1)$,
$$
	\beta^{(p)(q)}_s=0\qquad\text{for every $(p)\in I^m_s$ and every $(q)\in \bar I^n_{s}$}.
$$
\newline
Plugging this into \eqref{null}, we obtain \eqref{null-form} for $\tilde{R}=(e_1|\ldots|e_{n-1})$,
with $\tilde{\beta}_s^{(p)(q)}:=\beta_s^{(p)(q)}$ for every $(p)\in I^m_s$ and every $(q)\in I^{n-1}_s=I^{n}_s\setminus \bar I^n_{s}$.

\noindent{\bf (iii) implies (iv):}\\
This is a simple consequence of the fact that $(F+a\otimes\varrho)\tilde{R}=F\tilde{R} +a\otimes(\tilde{R}^\top \varrho)=F\tilde{R}$,
where we used that $(\tilde R|\varrho)\in O(n)$ and thus $\tilde{R}^\top \varrho=0$.

\noindent{\bf (iv) implies (i):}\\
For given $F\in \R^{m\times n}$ and $\varphi\in W^{1,\infty}_{\Gamma_\varrho}(\O_\varrho;\R^m)$,
we define
$$
	g(t):=\int_{\O_\varrho} \cN(F+t\nabla \varphi)\,\d x,\quad t\in\R.
$$
Integrating by parts, we obtain that
\be\label{tbnl-5}
\begin{aligned}
	g'(t)
	&=\int_{\O_\varrho} \nabla_F\cN(F+t\nabla \varphi)\cdot \nabla \varphi\,\d x\\
	&=-\int_{\O_\varrho} [\div \nabla_F\cN(F+t\nabla \varphi)] \cdot \varphi\,\d x
	+\int_{\partial\O_\varrho} \nabla_F\cN(F+t\nabla \varphi) \cdot(\varphi\otimes \varrho)\,\d S
\end{aligned}
\ee
Since $\cN$ is null Lagrange, we know that $\div \nabla_F\cN(F+\nabla \psi)=0$ a.e.~in $\O_\varrho$ for every $\psi\in C^2(\O_\varrho)$.
Hence, the first term in \eqref{tbnl-5} vanishes, and since $\varphi=0$ on $\partial\O_\varrho\setminus \Gamma_\varrho$, we see that
$$
\begin{aligned}
	g'(t)
	&=\int_{\Gamma_\varrho} \nabla_F\cN(F+t\nabla \varphi)\cdot(\varphi\otimes \varrho)\,\d S
\end{aligned}
$$
On the other hand, (iv) implies that
$\nabla \cN(H)\cdot (a\otimes \varrho)=0$ for every $a\in \R^N$ and every $H\in \R^{m\times n}$.
As a consequence, $g'(t)=0$ for every $t\in\R$, whence $g(0)=g(1)$.
\end{proof}

\subsection{DiPerna-Majda measures}
While Young measures \cite{y} successfully capture oscillatory behavior   of
sequences they completely miss concentrations. There are several tools how to deal with concentrations.
They can be considered as generalization of Young measures, see for example Alibert's and Bouchitt\'{e}'s approach  \cite{ab}, DiPerna's and Majda's treatment of concentrations \cite{DiPeMaj87a}, or  Fonseca's method described in \cite{fonseca}. An overview can be found in \cite{r,tartar1}.  Moreover, in many cases,
we are interested in oscillation/concentration effects generated by sequences of gradients. A characterization of Young measures generated by gradients was
completely given by Kinderlehrer and Pedregal \cite{k-p1,k-p}, cf. also \cite{pedregal}.
 The  first attempt to characterize both
oscillations and concentrations in sequences of gradients is due
to Fonseca, M\"{u}ller, and Pedregal \cite{fmp}. They dealt with a special situation of $\{g v(\nabla u_k)\}_{k\in\N}$ where $v$ is positively $p$-homogeneous, $u_k\in W^{1,p}(\O;\R^m)$, $p>1$, with  $g$ continuous and   vanishing on $\partial\O$.
Later on, a characterization of oscillation/concentration effects in terms of  DiPerna's and Majda's generalization
of Young measures was given in \cite{KaKru08a} for arbitrary integrands and in \cite{ifmk} for sequences living in the kernel of a first-order differential operator. Recently Kristensen and Rindler \cite{kristensen-rindler} characterized oscillation/concentration effects in the case $p=1$.

Let us take a complete (i.e. containing constants, separating points
from closed subsets and closed with respect to the Chebyshev norm)
separable  ring ${\cal R}$ of
continuous bounded functions $\R^{m\times n}\to\R$. It is known \cite[Sect.~3.12.21]{engelking} that there is a
one-to-one correspondence ${\cal R}\mapsto\b_{\cal R}\R^{m\times n}$ between such
rings and metrizable compactifications of $\R^{m\times n}$; by a compactification
we mean here a compact set, denoted by $\b_{\cal R}\R^{m\times n}$, into which
$\R^{m\times n}$ is embedded homeomorphically and densely. For simplicity, we
will not distinguish between $\R^{m\times n}$ and its image in $\b_{\cal R}\R^{m\times n}$. Similarly, we will not distinguish between elements of ${\cal R}$ and their unique continuous extensions on $\b_{\cal R}\R^{m\times n}$.

 Let $\s\in\rca(\bar\O)$ be a  positive Radon measure on a bounded domain $\O\subset\R^n$. A
mapping $\hat\nu:x\mapsto \hat\nu_x$ belongs to the
space $L^{\infty}_{\rm w}(\bar{\O},\s;\rca(\b_{\cal R} \R^{m\times n}))$ if it is weakly*  $\s$-measurable (i.e., for any $v_0\in C_0(\R^{m\times n})$, the mapping
$\bar\O\to\R:x\mapsto\int_{\b_{\cal R}\R^{m\times n}} v_0(s)\hat\nu_x(\d s)$ is $\s$-measurable in
the usual sense). If additionally
$\hat\nu_x\in\prca(\b_{\cal R}\R^{m\times n})$ for $\s$-a.a. $x\in\bar\O$
 the collection $\{\hat\nu_x\}_{x\in\bar{\O}}$ is the so-called
Young measure on $(\bar\O,\s)$ \cite{y}, see also
\cite{Ba89a,r,tartar1}.

DiPerna and Majda \cite{DiPeMaj87a} shown that having a bounded
sequence in $L^p(\O;\R^{m\times n})$ with $1\le p<+\infty$ and
$\O$ an open domain in $\Rn$, there exists its subsequence
(denoted by the same indices) a positive Radon measure
$\s\in\rca(\bar\O)$ and a Young measure  $\hat\nu:x\mapsto
\hat\nu_x$  on  $(\bar\O,\s)$ such that  $(\s,\hat\nu)$ is
attainable by a sequence $\{y_k\}_{k\in\N}\subset
L^p(\O;\R^{m\times n})$ in the sense that $\forall g\!\in\!
C(\bar\O)\ \forall v_0\!\in\!{\cal R}$:
\be\label{basic}\lim_{k\to\infty}\int_\O g(x)v(y_k(x))\d x =
\int_{\bar\O}\int_{\b_{\cal R}\R^{m\times
n}}g(x)v_0(s)\hat\nu_x(\d s)\s(\d x)\ , \ee where \[
v\in\ups:=\{v_0(1+|\cdot|^p);\ v_0\in{\cal R}\}.\]
 In particular,
putting $v_0=1\in{\cal R}$ in (\ref{basic}) we can see that
\be\label{measure} \lim_{k\to\infty}(1+|y_k|^p)\ =\ \s \ \ \ \
\mbox{ weakly* in }\ \rca(\bar\O)\ . \ee If (\ref{basic}) holds,
we say that $\{y_k\}_{\in\N}$ generates $(\sigma,\hat\nu)$. Let us
denote by ${\cal DM}^p_{\cal R}(\O;\R^{m\times n})$ the set of all
pairs $(\s,\hat\nu)\in\rca(\bar\O)\times L^{\infty}_{\rm
w}(\bar{\O},\s; \rca(\b_{\cal R} \R^{m\times n}))$ attainable by
sequences from $L^p(\O;\R^{m\times n})$; note that, taking $v_0=1$
in (\ref{basic}), one can see that these sequences must be
inevitably bounded in $L^p(\O;\R^{m\times n})$. We also denote by $\gcdm$ measures from $\cdm$ generated by a sequence of gradients of some bounded sequence in $W^{1,p}(\O;\R^m)$.  The explicit
description of the elements from ${\cal DM}^p_{\cal
R}(\O;\R^{m\times n})$, called DiPerna-Majda measures, for
unconstrained sequences  was given  in \cite[Theorem~2]{KruRou97a}.
In fact, it is easy to see that (\ref{basic}) can be also written in the form
\be\label{basic0}\lim_{k\to\infty}\int_\O h(x,y_k(x))\d x =
\int_{\bar\O}\int_{\b_{\cal R}\R^{m\times
n}}h_0(x,s)\hat\nu_x(\d s)\s(\d x)\ , \ee where $
h(x,s):=h_0(x,s)(1+|s|^p) $ and $h_0\in C(\bar\O\otimes\beta_{\cal R}\R^{m\times n})$.

We say that $\{y_k\}$ generates $(\sigma,\hat\nu)$ if (\ref{basic}) holds. Moreover, we denote $d_\sigma\in L^1(\O)$ the absolutely continuous (with respect to the Lebesgue measure) part of $\sigma$ in the Lebesgue decomposition of $\sigma$.

We will denote elements from $\cdm$ which are generated by $\{\nabla u_k\}_{k\in\N}$ for some bounded $\{u_k\}\subset W^{1,p}(\O;\R^m)$ by $\gcdm$.

\subsubsection{Compactification of $\R^{m\times n}$ by the sphere}
In what follows we will  work   mostly  with a particular compactification of $\R^{m\times n}$, namely, with the compactification by the sphere. We will consider
the following ring  of continuous bounded functions
\begin{equation}\label{spherecomp}
\begin{aligned}
{\cal S}:=\bigg\{ \, v_0\in C(\R^{m\times n})\,:\,\text{there exist}~ v_{0,0}\in C_0(\R^{m\times n})\, v_{0,1}\in C(S^{(m\times n)-1})\, \mbox{, and } c\in\R \mbox{ s.t. }&\\
 v_0(F) := c+ v_{0,0}(F)+v_{0,1}\left(\frac{F}{|F|}\right)
\frac{|F|^p}{1+|F|^p}\mbox { if $F\ne 0$ and }  v_0(0):=v_{0,0}(0) \,&\bigg\} ,
\end{aligned}
\end{equation}
where $S^{m\times n-1}$ denotes the $(mn-1)$-dimensional unit sphere in $\R^{m\times n}$. Then $\b_{\cal
S}\R^{m\times n}$ is homeomorphic to the unit ball $\overline{B(0,1)}\subset \R^{m\times n}$ via the mapping $d:\R^{m\times n}\to B(0,1)$, $d(s):=s/(1+|s|)$ for all $s\in\R^{m\times n}$. Note that $d(\R^{m\times n})$ is dense in $\overline{B(0,1)}$.

For any $v\in\upss$  there exists a continuous and positively $p$-homogeneous function $v_\infty:\R^{m\times n}\to\R$ (i.e. $v_\infty(\alpha F)=\alpha^p v_\infty(F)$ for all $\alpha\ge 0 $ and $ F\in\R^{m\times n}$) such that
\be\label{recessionf}
\lim_{|F|\to\infty}\frac{v(F)-v_\infty(F)}{|F|^p}=0\ .
\ee

Indeed, if $v_0$ is as in (\ref{spherecomp}) and $v=v_0(1+|\cdot|^p)$  then set
$$v_\infty(F):=\left(c+v_{0,1}\left(\frac{F}{|F|}\right)\right)|F|^p\mbox{ for $F\in\R^{m\times n}\setminus\{0\}$.} $$
By continuity we define $v_\infty(0):=0$. It is easy to see that $v_\infty$ satisfies (\ref{recessionf}).
Such $v_\infty$ is called the {\it recession function} of $v$.

The following two results were proven in \cite{Kru10a}.

\begin{lemma}\label{equiintegrability}
Let $1\le p<+\infty$,  $0\le h_0\in C(\bar\O\times\beta_{\cal S}\R^{m\times n})$, let $h(x,F):=h_0(x,F)(1+|F|^p)$, and let $\{u_k\}\subset W^{1,p}(\O;\R^m)$ be a bounded sequence with $\{\nabla u_k\}_{k\in\N}\subset L^p(\O;\R^{m\times n})$ generating
$(\sigma,\hat\nu)\in {\cal DM}^p_{\cal S}(\O;\R^m)$. Then
$$
	\text{$\{h(x,\nabla u_k)\}_{k\in\N}$ is weakly relatively compact in $L^1(\O)$}
$$
if and only if
\be\label{wrc}
\int_{\bar\O}\int_{\beta_{\cal S}\R^{m\times n}\setminus\R^{m\times n}} h_0(x,F)\hat\nu_x(\md F)\sigma(\md x)=0\ .
\ee

\end{lemma}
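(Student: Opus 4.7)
The plan is to invoke the Dunford--Pettis theorem, which reduces weak relative compactness of $\{h(x,\nabla u_k)\}$ in $L^1(\O)$ to uniform $L^1$-boundedness together with equiintegrability. The $L^1$-bound
$$\int_\O h(x,\nabla u_k)\,\md x\le\|h_0\|_\infty\bigl(|\O|+\sup_k\|\nabla u_k\|_{L^p}^p\bigr)$$
is automatic, since $h_0\in C(\bar\O\times\beta_{\cal S}\R^{m\times n})$ is bounded on its compact domain and $\{\nabla u_k\}$ is bounded in $L^p$. The content of the lemma therefore reduces to showing that equiintegrability of $\{h(x,\nabla u_k)\}$ is equivalent to the vanishing of~\eqref{wrc}; heuristically, equiintegrability fails precisely when $h(x,\nabla u_k)$ concentrates mass in the region where $|\nabla u_k|$ blows up, and the restriction of $(\sigma,\hat\nu)$ to $\beta_{\cal S}\R^{m\times n}\setminus\R^{m\times n}$ is exactly the object that records such concentration.

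To make this precise, I would pick a cutoff $\eta_L\in C_c(\R^{m\times n})$ with $0\le\eta_L\le 1$, $\eta_L\equiv 1$ on $B(0,L)$ and $\eta_L\equiv 0$ outside $B(0,2L)$. Because $\eta_L$ is compactly supported in $\R^{m\times n}$, its extension by $0$ is continuous on $\beta_{\cal S}\R^{m\times n}$ (which via $d(F)=F/(1+|F|)$ is homeomorphic to $\overline{B(0,1)}$, as described after~\eqref{spherecomp}). Consequently $(x,F)\mapsto h_0(x,F)(1-\eta_L(F))$ is an admissible test integrand for the DiPerna-Majda convergence~\eqref{basic0}, yielding
$$
\lim_{k\to\infty}\int_\O h(x,\nabla u_k)\bigl(1-\eta_L(\nabla u_k)\bigr)\,\md x
=\int_{\bar\O}\int_{\beta_{\cal S}\R^{m\times n}} h_0(x,F)\bigl(1-\eta_L(F)\bigr)\,\hat\nu_x(\md F)\sigma(\md x).
$$
Since $h_0\ge 0$ and $1-\eta_L$ decreases pointwise on $\R^{m\times n}$ to $0$ while remaining identically $1$ on $\beta_{\cal S}\R^{m\times n}\setminus\R^{m\times n}$, monotone convergence shows that the right-hand side tends to $\int_{\bar\O}\int_{\beta_{\cal S}\R^{m\times n}\setminus\R^{m\times n}} h_0(x,F)\,\hat\nu_x(\md F)\sigma(\md x)$ as $L\to\infty$.

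For the ``$\Leftarrow$'' direction, assume the integral in~\eqref{wrc} vanishes. Since $1-\eta_L(F)\ge\mathbf{1}_{\{|F|>2L\}}$, the identity above forces $\lim_{L\to\infty}\limsup_k\int_{\{|\nabla u_k|>2L\}}h(x,\nabla u_k)\,\md x=0$, which combined with the Chebyshev estimate $|\{|\nabla u_k|>L\}|\le L^{-p}\sup_k\|\nabla u_k\|_{L^p}^p$ yields that $\sup_k\int_A h(x,\nabla u_k)\,\md x$ is small uniformly over measurable $A\subset\O$ of small measure --- the Dunford--Pettis criterion for equiintegrability. For the converse direction, equiintegrability together with $|\{|\nabla u_k|>L\}|\to 0$ uniformly in $k$ directly gives $\sup_k\int_{\{|\nabla u_k|>L\}}h(x,\nabla u_k)\,\md x\to 0$; the left-hand side of the displayed identity then vanishes as $L\to\infty$, and hence so does the right-hand side, which is~\eqref{wrc}. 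The step I expect to be most delicate is checking that the cutoff $\eta_L$ really extends continuously to $\beta_{\cal S}\R^{m\times n}$ so that~\eqref{basic0} is applicable with the integrand $h_0(1-\eta_L)$; once this admissibility is verified, both implications reduce to a clean exchange of the $k$- and $L$-limits combined with Dunford--Pettis.
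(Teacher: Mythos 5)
Your proof is correct, and the paper itself does not reprove this lemma (it only cites \cite{Kru10a}), so there is no in-paper argument to compare against; what you give is the standard cutoff/Dunford--Pettis argument one would find in that reference. A few points worth flagging to make the writeup airtight: the passage from $\lim_L\limsup_k$ of the tail integrals to genuine uniform integrability implicitly treats the finitely many small indices $k$ separately (using absolute continuity of each fixed $h(\cdot,\nabla u_k)\in L^1$ to convert $\limsup_k$ into $\sup_k$), and this step should be stated explicitly. Also, in the ``$\Leftarrow$'' direction the Chebyshev bound $|\{|\nabla u_k|>L\}|\le L^{-p}\sup_k\|\nabla u_k\|^p_{L^p}$ is not really what is used; what makes $\sup_k\int_A h(\cdot,\nabla u_k)$ small for $|A|$ small is the pointwise bound $h(x,F)\le\|h_0\|_\infty(1+|F|^p)\le\|h_0\|_\infty(1+(2L)^p)$ on the complement of $\{|\nabla u_k|>2L\}$, combined with the uniform smallness of the tail integrals; Chebyshev genuinely enters only in the ``$\Rightarrow$'' direction. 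Finally, the admissibility check you flag as delicate is in fact harmless and worth saying in one line: $\eta_L$ has compact support, so its extension to $\beta_{\cal S}\R^{m\times n}$ by zero is continuous, $1-\eta_L\equiv 1$ on the remainder sphere, and hence $h_0(1-\eta_L)\in C(\bar\O\times\beta_{\cal S}\R^{m\times n})$; since moreover $1-\eta_L$ decreases pointwise to $\mathbf{1}_{\rems}$ and $h_0\ge 0$, the $L\to\infty$ limit of the right-hand side is justified by monotone (or dominated) convergence.
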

\begin{remark}\label{rem:equiintegrability}
In Lemma~\ref{equiintegrability}, we assumed that $h_0$ and, consequently, $h$ are non-negative, but this assumption can be relaxed. For the assertion of the lemma to hold true, it actually suffices to have that $h(x,\nabla u_k)\geq 0$ for every $k$ and a.e.~$x\in\O$. This can easily be seen by applying Lemma~\ref{equiintegrability} with $h^+$ (the positive part) instead of $h$.
\end{remark}

\bigskip

\begin{theorem}\label{suff-nec}
Let  $\O\subset\R^n$ be a bounded domain with boundary of class $C^1$,
$1<p<+\infty$, and $(\sigma,\hat\nu)\in\cdms$. Then there is
 a bounded sequence
$\{u_k\}_{k\in\N}\subset W^{1,p}(\O;\R^m)$ such that $\{\nabla
u_k\}_{k\in\N}$ generates $(\sigma,\hat\nu)$ if and only if the
following four conditions hold:
\be\label{firstmoment7} \exists u\in W^{1,p}(\O;\R^m)\ \mbox{ such that  for a.a. $x\in\O$:  }  \nabla
u(x)=d_\sigma(x)\int_{\beta_{\cal S}\R^{m\times n}}\frac{F}{1+|F|^p}\hat\nu_x(\d
F)\ ,
\ee
for almost all $x\in\O$ and for all  $v\in\upss$  the
following inequality  is fulfilled
\be\label{qc7} Qv(\nabla
u(x))\le d_\sigma(x)\int_{\beta_{\cal S}\R^{m\times
n}}\frac{v(F)}{1+|F|^p}\hat\nu_x(\d F)\ ,
\ee
for $\sigma$-almost
all $x\in\O$ and all $v\in\upss$ with $Qv_\infty>-\infty$  it holds
that
\be\label{rem7}
 0\le  \int_{\beta_{{\cal S}}\R^{m\times n}\setminus\R^{m\times n}}\frac{v(F)}{1+|F|^p}\hat\nu_x(\md F)\ ,
\ee
and for $\sigma$-almost all $x\in\partial\O$ with the outer unit normal to the boundary  $\varrho(x)$  and all  $v\in\upss$ with $Q_{b,\varrho(x)}v_\infty(0)>-\infty$ it holds that
\be\label{bd7}
0\le  \int_{\beta_{{\cal S}}\R^{m\times n}\setminus\R^{m\times n}}\frac{v(F)}{1+|F|^p}\hat\nu_x(\md F)\ .
\ee
\end{theorem}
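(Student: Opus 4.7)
The plan is to treat necessity and sufficiency separately, following the general philosophy for gradient Young/DiPerna--Majda measures but adapted to the sphere compactification $\coms$ and, crucially, to include the boundary condition~(\ref{bd7}).

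For necessity, assume $\{\nabla u_k\}$ generates $(\sigma,\hat\nu)\in\cdms$. First, testing (\ref{basic}) with the coordinate integrands $v(F)=F_{ij}$ (so that $v_0(F)=F_{ij}/(1+|F|^p)\in{\cal S}$ because $p>1$, hence $v_0\in C_0(\R^{m\times n})$) and identifying the weak-$L^p$ limit of $\nabla u_k$ as $\nabla u$ with $u_k\wto u$ in $W^{1,p}$ (along a subsequence) yields (\ref{firstmoment7}); the singular part of $\sigma$ carries no contribution on the left-hand side. Inequality (\ref{qc7}) is the Jensen-type inequality for gradient measures: combine the weak convergence $\int g\,v(\nabla u_k)\,\d x\to\int_{\bar\O}g(x)\int[v(F)/(1+|F|^p)]\hat\nu_x(\d F)\sigma(\d x)$ for $v\in\upss$ with the $W^{1,p}$ weak lower semicontinuity of $\int Qv(\nabla u)\,\d x$. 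Conditions (\ref{rem7}) and (\ref{bd7}) isolate the purely concentrating part of $\hat\nu_x$ on $\rems$; at an interior Lebesgue point $x_0$ of $\sigma_s$ one localizes by blow-up on a small cube around $x_0$, so that the affine part disappears in the limit and ordinary quasiconvexity of $Qv_\infty$ at $0$ yields (\ref{rem7}); at a boundary point $x_0\in\partial\O$ one rescales and uses the $C^1$-regularity to reduce $\O$ near $x_0$ to the half-space $H_{0,\varrho(x_0)}$, so that the blown-up sequence admits the trace structure required in Definition~\ref{p-qcb-def}, and (\ref{bd7}) follows from the very definition of $Q_{b,\varrho(x_0)}v_\infty(0)$.

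For sufficiency, I would use the standard three-step recipe. Step~one: for a \emph{homogeneous} target, where $\hat\nu_x\equiv\hat\nu_0$ is independent of $x$ and $\sigma$ has a specific structure on a cube (or on a standard boundary domain), construct a generating sequence by splitting $\hat\nu_0$ into its $\R^{m\times n}$-part and its $\rems$-part. The $\R^{m\times n}$-part is generated by a classical Kinderlehrer--Pedregal-type gradient Young measure construction whose existence is guaranteed by (\ref{qc7}) together with (\ref{firstmoment7}); the concentrating $\rems$-part is generated by adding localized bumps $\varphi_{\e,k}$ supported in small balls, whose gradients produce the prescribed mass at infinity. Near $\partial\O$ these bumps must live in the test space $W^{1,p}_{\partial\O_\varrho\setminus\Gamma_\varrho}(\O_\varrho;\R^m)$ of Definition~\ref{p-qcb-def}, and their existence is precisely what is encoded in (\ref{bd7}) via (\ref{envelope}). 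Step~two: for the general case, use a measurable selection together with a Vitali covering of $\bar\O$ by small cubes (interior) and standard boundary domains (at $\partial\O$, available because $\partial\O\in C^1$) to glue the homogeneous generators into a single bounded sequence in $W^{1,p}(\O;\R^m)$. Step~three: pass to a diagonal subsequence and verify (\ref{basic}) for all $v\in\upss$ by density in the separable ring ${\cal S}$, reducing the test to a countable dense set.

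The main obstacle is Step~one near the boundary, specifically, concentrating the correct amount of mass at $\sigma$-a.a.\ $x\in\partial\O$ \emph{without} spoiling the weak limit $u$ prescribed in (\ref{firstmoment7}) or the interior concentration budget from (\ref{rem7}). This forces a careful use of cut-off functions that vanish on $\partial\O_\varrho\setminus\Gamma_\varrho$, combined with the local flattening of $\partial\O$ (using $C^1$-regularity) so that each boundary piece is diffeomorphic to a standard boundary domain with the correct outer normal. A secondary subtlety is the compatibility of the sphere compactification with $p$-homogeneous recession functions: one must verify that generating the right mass at the sphere at infinity in $\coms$ is equivalent to realizing the integral $\int v_\infty(F)/(1+|F|^p)\hat\nu_x(\d F)$, which follows from (\ref{recessionf}) and the positive $p$-homogeneity of $v_\infty$.
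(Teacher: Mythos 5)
There is no proof of Theorem~\ref{suff-nec} in this paper. It is introduced by the sentence ``The following two results were proven in \cite{Kru10a}'' and is cited, not proved, from Kru\v{z}\'{\i}k's ESAIM COCV paper on quasiconvexity at the boundary and concentration effects. So there is no in-paper argument against which to compare your proposal; I can only assess it on its own terms.

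Taken as such, your outline names the correct ingredients -- $C_0$ test functions for \eqref{firstmoment7}, a Jensen-type inequality for \eqref{qc7}, blow-up at interior and boundary points for \eqref{rem7} and \eqref{bd7}, and a Kinderlehrer--Pedregal construction plus superimposed concentrating bumps for sufficiency -- but it is an outline, and the parts that make such a characterization difficult are all deferred. Concretely: (a) passing from weak lower semicontinuity of $\int Qv(\nabla u)\,\d x$ to the pointwise inequality \eqref{qc7} requires a localization argument (shrinking cubes around Lebesgue points, rescaling, cutting off to kill boundary effects) that you do not indicate; (b) for the necessity of \eqref{rem7} and \eqref{bd7}, the phrases ``the affine part disappears in the limit'' and ``ordinary quasiconvexity of $Qv_\infty$ at $0$ yields \eqref{rem7}'' name the desired conclusion rather than produce one -- the technical crux is to modify a subsequence concentrating near the chosen point so that it becomes an admissible competitor in the infimum defining $Qv_\infty(0)$, respectively in \eqref{envelope} for $Q_{b,\varrho}v_\infty(0)$, while controlling the error terms; this cutting is where the $C^1$ regularity of $\partial\O$ and the zero trace condition on $\partial\O_\varrho\setminus\Gamma_\varrho$ actually enter, and it cannot simply be asserted; (c) your sufficiency step does not address how the glued homogeneous generators are to be reconciled with the global first-moment constraint \eqref{firstmoment7} and the fixed weak limit $u$, nor how the boundary-concentrating bumps are to be shown to generate exactly the prescribed $\hat\nu_x$ on $\rems$. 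Finally, a small but real imprecision: you say ``the singular part of $\sigma$ carries no contribution on the left-hand side'' when deriving \eqref{firstmoment7}, but the left-hand side of \eqref{basic} is a Lebesgue integral over $\O$ and carries no $\sigma$ at all; the correct statement is that for $v_0\in C_0(\R^{m\times n})$ the right-hand side receives no contribution from $\sigma_s$, because for $\sigma_s$-a.e.\ $x$ the measure $\hat\nu_x$ is supported on the remainder $\rems$, where $v_0$ vanishes.
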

\begin{remark} If the traces of $\{u_k\}$ are fixed near some $x\in \partial \O$ and coincide with the trace of $u$, i.e., $u_k=u$ 
in the sense of trace on $\partial\Omega$, see e.g. \cite{kujofu}, then
condition (\ref{bd7}) holds for a bigger class of admissible $v$, namely, all $v\in\upss$ with $Qv>-\infty$. This can be inferred from \cite[above Remark 3.9]{KaKru08a}.
\end{remark}
The theorem can be extended (with arguments analogous to case of Young measures as presented in \cite{Ka97a}) to allow $x$-dependent test functions (instead of $v$) in \eqref{qc7}--\eqref{bd7}:
\begin{corollary}\label{cor:suff-nec}
In the situation of Theorem~\ref{suff-nec}, if $(\sigma,\hat\nu)$ is generated by $\{\nabla
u_k\}_{k\in\N}$, with a bounded sequence
$\{u_k\}_{k\in\N}\subset W^{1,p}(\O;\R^m)$, then in addition to \eqref{firstmoment7},
the following three conditions hold for all functions $h$ of the form $h(x,F):=h_0(x,F) (1+\abs{F}^p$ with some $h_0\in C(\overline{\O}\times \coms)$:

For almost all $x\in\O$ and all $h$, we have that
\be\label{qc7B} Qh(x,\nabla
u(x))\le d_\sigma(x)\int_{\beta_{\cal S}\R^{m\times
n}}\frac{h(x,F)}{1+|F|^p}\hat\nu_x(\d F)\ ,
\ee
for $\sigma$-almost
all $x\in\O$ and all $h$ with $Q h(x,\cdot)>-\infty$, it holds
that
\be\label{rem7B}
 0\le  \int_{\beta_{{\cal S}}\R^{m\times n}\setminus\R^{m\times n}}\frac{h(x,F)}{1+|F|^p}\hat\nu_x(\md F)\ ,
\ee
and for $\sigma$-almost all $x\in\partial\O$ with the outer unit normal to the boundary  $\varrho(x)$  and all $h$ with $[Q_{b,\varrho(x)} h_\infty(x,\cdot)](0)>-\infty$, where $h_\infty$ is the recession function of $h$ with respect to the second variable, we have that
\be\label{bd7B}
0\le  \int_{\beta_{{\cal S}}\R^{m\times n}\setminus\R^{m\times n}}\frac{h(x,F)}{1+|F|^p}\hat\nu_x(\md F)\ .
\ee
\end{corollary}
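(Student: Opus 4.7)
The plan is to reduce Corollary~\ref{cor:suff-nec} to Theorem~\ref{suff-nec} by a separability argument, upgrading the pointwise inequalities from constant test functions $v\in\upss$ to $x$-dependent integrands $h(x,F)=h_0(x,F)(1+|F|^p)$. Since $\coms$ is a compact metric space, $C(\coms)\cong{\cal S}$ is separable; I would pick a countable dense subset $\{v_0^{(j)}\}_{j\in\N}\subset {\cal S}$ and set $v^{(j)}:=v_0^{(j)}(1+|\cdot|^p)\in\upss$. Applying Theorem~\ref{suff-nec} to each $v^{(j)}$ yields exceptional sets---a Lebesgue-null $N_j\subset\O$ outside of which \eqref{qc7} holds, and (under the corresponding envelope-finiteness hypotheses) $\s$-null $M_j\subset\O$ and $M_j'\subset\partial\O$ for \eqref{rem7} and \eqref{bd7}. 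The countable unions $N$, $M$, $M'$ remain null, and all three Theorem~\ref{suff-nec} inequalities hold simultaneously for every admissible $j$ outside these sets.

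Second, fix $x_0$ outside the relevant null set and a test function $h_0\in C(\overline{\O}\times\coms)$. The slice $h_0(x_0,\cdot)\in C(\coms)$ admits a subsequence $v_0^{(j_k)}\to h_0(x_0,\cdot)$ uniformly, equivalently $|v^{(j_k)}(F)-h(x_0,F)|\le\eps_k(1+|F|^p)$ for every $F$, with $\eps_k\to 0$. Since $\hat\nu_{x_0}$ is a probability measure on $\coms$, the integral sides of \eqref{qc7B}--\eqref{bd7B} pass to the limit immediately from $\|h_0(x_0,\cdot)-v_0^{(j_k)}\|_{C(\coms)}\to 0$, with the same conclusion when the integration is restricted to $\rems$. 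For the envelope on the left of \eqref{qc7B}, the sandwich $|v^{(j_k)}-h(x_0,\cdot)|\le\eps_k(1+|\cdot|^p)$, combined with the trivial identity $Q(1+|\cdot|^p)=1+|\cdot|^p$ (valid since $F\mapsto 1+|F|^p$ is convex for $p\ge 1$) and the monotonicity of $Q$, gives the uniform bound $|Qv^{(j_k)}(F)-Qh(x_0,\cdot)(F)|\le\eps_k(1+|F|^p)$; specializing to $F=\nabla u(x_0)$ and passing $k\to\infty$ in \eqref{qc7} applied to $v^{(j_k)}$ produces \eqref{qc7B}. For \eqref{rem7B} and \eqref{bd7B} the left-hand side is just zero, so the uniform approximation of the right-hand side alone suffices.

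The main obstacle I anticipate is the envelope-finiteness clause in the quantifiers of \eqref{rem7B}--\eqref{bd7B}: the approximating sequence $\{v_0^{(j_k)}\}$ must be drawn from that part of ${\cal S}$ for which Theorem~\ref{suff-nec} actually delivers the inequality, namely those $v_0^{(j)}$ with $Qv^{(j)}_\infty>-\infty$ (for \eqref{rem7}) and $[Q_{b,\varrho(x_0)}v^{(j)}_\infty](0)>-\infty$ (for \eqref{bd7}). I would address this by performing the density step inside those restricted subclasses, exploiting the canonical decomposition \eqref{spherecomp} which makes envelope-finiteness a property of the sphere part of $v_0$ alone, and relying on stability of the envelopes $Qv_\infty$ and $Q_{b,\varrho}v_\infty(0)$ under uniform convergence on the sphere---a consequence of the variational definition \eqref{envelope} together with the $p$-Lipschitz estimate of Lemma~\ref{lemma} plugged inside the infimum. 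This yields density of the admissible restricted family inside the subclass of ${\cal S}$ singled out by the hypothesis of the corollary, closing the argument.
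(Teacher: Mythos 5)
Your overall architecture---a separability argument over $\coms$, a countable dense family $\{v_0^{(j)}\}\subset {\cal S}$, a single joint null set obtained by a countable union, and uniform approximation of the slice $h_0(x_0,\cdot)$---is indeed the natural way to upgrade Theorem~\ref{suff-nec} to $x$-dependent integrands, and the passage to the limit on the \emph{measure} side of \eqref{qc7B}--\eqref{bd7B} is unobjectionable: since $\hat\nu_{x_0}$ is a probability measure on $\coms$ and $v_0^{(j_k)}\to h_0(x_0,\cdot)$ uniformly, the integrals converge (including over $\rems$). The paper itself does not spell out a proof, so there is nothing to compare line-by-line with, but your plan is of the expected shape.

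There is, however, a genuine gap in the envelope step. You claim that $|v^{(j_k)}-h(x_0,\cdot)|\le\eps_k(1+|\cdot|^p)$, together with $Q(1+|\cdot|^p)=1+|\cdot|^p$ and monotonicity of $Q$, yields $|Qv^{(j_k)}(F)-Qh(x_0,F)|\le\eps_k(1+|F|^p)$. This inequality is false in general. Writing $g:=1+|\cdot|^p$, monotonicity gives $Q(w-\eps g)\le Qw-\eps g$ and $Q(w+\eps g)\ge Qw+\eps g$ (both by adding/subtracting $\eps g$ from a quasiconvex minorant), so the sandwich $Q(h-\eps_k g)\le Qv^{(j_k)}\le Q(h+\eps_k g)$ does not confine $Qv^{(j_k)}$ to an $\eps_k g$-window around $Qh$; the outer bounds may be $-\infty$ and $+\infty$-large respectively. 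A concrete counterexample: take $h(x_0,\cdot)\equiv 0$ (so $Qh=0$) and $v^{(j_k)}=-\eps_k g$ (so $v_0^{(j_k)}=-\eps_k\to 0$ in $C(\coms)$); then $Qv^{(j_k)}\equiv -\infty$, and \eqref{qc7} applied to $v^{(j_k)}$ is vacuous and gives no information in the limit. The same objection applies to your claimed ``stability'' of $Qv_\infty$ and $Q_{b,\varrho}v_\infty(0)$ under uniform convergence on the sphere: Lemma~\ref{lemma} is a Lipschitz estimate in $F$ for a \emph{single} quasiconvex function, and does not control the envelope as a function of $v$, nor does the variational definition \eqref{envelope}, for exactly the same reason.

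The repair is easy but essential, and makes the envelope-finiteness issue disappear simultaneously: approximate \emph{from above}. Enlarge your countable dense family so that it is stable under adding rational positive constants; then for each $x_0$ choose $v_0^{(j_k)}\in{\cal S}$ with $v_0^{(j_k)}\ge h_0(x_0,\cdot)$ on $\coms$ and $\|v_0^{(j_k)}-h_0(x_0,\cdot)\|_{C(\coms)}\to 0$. Then $v^{(j_k)}\ge h(x_0,\cdot)$, hence $Qv^{(j_k)}\ge Qh(x_0,\cdot)$, and for $F=\nabla u(x_0)$ the chain
\[
Qh(x_0,F)\ \le\ Qv^{(j_k)}(F)\ \le\ d_\sigma(x_0)\int_{\coms}\frac{v^{(j_k)}(F')}{1+|F'|^p}\,\hat\nu_{x_0}(\d F')\ \xrightarrow[k\to\infty]{}\ d_\sigma(x_0)\int_{\coms}\frac{h(x_0,F')}{1+|F'|^p}\,\hat\nu_{x_0}(\d F')
\]
gives \eqref{qc7B} without any two-sided envelope stability. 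The same one-sided comparison yields $(v^{(j_k)})_\infty\ge h_\infty(x_0,\cdot)$, hence $Q(v^{(j_k)})_\infty\ge Qh_\infty(x_0,\cdot)>-\infty$ and $Q_{b,\varrho(x_0)}(v^{(j_k)})_\infty(0)\ge Q_{b,\varrho(x_0)}h_\infty(x_0,\cdot)(0)>-\infty$, so that every $v^{(j_k)}$ is automatically admissible in \eqref{rem7} and \eqref{bd7}, and you may pass to the limit in those inequalities as well.
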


\section{Weak continuity up to the boundary}

\begin{theorem}\label{th:weakcontinuityup}
Let $m,n\in\N$ with $n\geq 2$, let $\Omega\subset \R^n$ be open and bounded with boundary of class $C^1$, and let
$f:\overline{\Omega}\times \R^{m\times n}\to \R$ be a continuous function. In addition, suppose that for every $x\in \Omega$, $f(x,\cdot)$ is a null Lagrangian and
for every $x\in \partial\Omega$, $f(x,\cdot)$ is a null Lagrangian at the boundary with respect to $\varrho(x)$, the outer normal to $\partial\Omega$ at $x$. Hence, by Theorem \ref{thm:bnulllag}, $f(x,\cdot)$ is a polynomial, whose degree we denote by $d_f(x)$.
Finally, let $p\in (1,\infty)$ with $p\geq d_f(x)$ for every $x\in\overline{\Omega}$
and let $(u_k)\subset W^{1,p}(\Omega;\R^m)$ be a sequence with $u_k\rightharpoonup u$ weakly in $W^{1,p}$.
If
$$
	f(x,\nabla u_k(x))\geq 0~~\text{for every $k\in\N$ and a.e.~$x\in\Omega$,}
$$
then $f(\cdot,\nabla u_n)\rightharpoonup f(\cdot,\nabla u)$ weakly in $L^1(\O)$.
\end{theorem}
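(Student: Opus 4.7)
The plan is to apply Corollary~\ref{cor:suff-nec} simultaneously with $h=f$ and $h=-f$; this turns the inequalities \eqref{qc7B}--\eqref{bd7B} into equalities, rules out concentrations of $\{f(\cdot,\nabla u_k)\}$ both inside $\Omega$ and at $\partial\Omega$, and then leads to weak $L^1$-convergence via Lemma~\ref{equiintegrability} together with a uniqueness-of-Lebesgue-decomposition argument.

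\textbf{Step 1 (setup).} By passing to a subsequence, we may assume $\{\nabla u_k\}$ generates some $(\sigma,\hat\nu)\in\gcdms$ on the spherical compactification, and that $u$ satisfies \eqref{firstmoment7}. Since $f(x,\cdot)$ is a polynomial of degree at most $p$ with coefficients depending continuously on $x$, the function $h_0(x,F):=f(x,F)/(1+\abs{F}^p)$ extends continuously to $\bar\Omega\times\coms$, its boundary value at the sphere at infinity being the (possibly vanishing) degree-$p$ homogeneous part $f_p(x,\cdot)$ of $f(x,\cdot)$. Hence $h(x,F):=f(x,F)$ is admissible in Corollary~\ref{cor:suff-nec}.

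\textbf{Step 2 (no concentrations).} As $f(x,\cdot)$ is a null Lagrangian, $\pm f(x,\cdot)$ are both quasiconvex, so $Q(\pm f)(x,\cdot)=\pm f(x,\cdot)>-\infty$. As $f(x,\cdot)$ is a null Lagrangian at the boundary, so is each homogeneous component: expanding the invariance $f(x,F+a\otimes\varrho)=f(x,F)$ from Theorem~\ref{thm:bnulllag}(iv) in a scaling parameter separates the degrees. In particular $\pm f_p(x,\cdot)=\pm h_\infty(x,\cdot)$ are $p$-qcb at $(0,\varrho(x))$, with $Q_{b,\varrho(x)}(\pm h_\infty(x,\cdot))(0)=0>-\infty$. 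Applying \eqref{qc7B}, \eqref{rem7B} and \eqref{bd7B} to both $h=f$ and $h=-f$ and subtracting the two resulting inequalities yields
\begin{equation*}
f(x,\nabla u(x))=d_\sigma(x)\int_{\coms}\frac{f(x,F)}{1+\abs{F}^p}\,\hat\nu_x(\md F)\quad\text{for a.e.\ }x\in\Omega,
\end{equation*}
\begin{equation*}
\int_{\coms\setminus\R^{m\times n}}\frac{f(x,F)}{1+\abs{F}^p}\,\hat\nu_x(\md F)=0\quad\text{for $\sigma$-a.e.\ }x\in\bar\Omega.
\end{equation*}

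\textbf{Step 3 (equi-integrability).} Combining the second identity above with the sign assumption $f(\cdot,\nabla u_k)\geq 0$ via Remark~\ref{rem:equiintegrability} enables us to apply Lemma~\ref{equiintegrability} to $h=f$ and conclude that $\{f(\cdot,\nabla u_k)\}$ is weakly relatively compact in $L^1(\Omega)$. Passing to a further subsequence, $f(\cdot,\nabla u_k)\rightharpoonup g$ weakly in $L^1(\Omega)$ for some $g\in L^1(\Omega)$.

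\textbf{Step 4 (identifying the limit).} For every $\phi\in C(\bar\Omega)$, the weak $L^1$ limit together with \eqref{basic0} yields
\begin{equation*}
\int_\Omega\phi\,g\,\md x=\int_{\bar\Omega}\phi(x)\,I(x)\,\sigma(\md x),\qquad I(x):=\int_{\coms}\frac{f(x,F)}{1+\abs{F}^p}\,\hat\nu_x(\md F).
\end{equation*}
Thus the Radon measure $g\,\md x$ on $\bar\Omega$, which is absolutely continuous with respect to Lebesgue measure, coincides with $I\,\md\sigma=I\,d_\sigma\,\md x+I\,\md\sigma_s$. Uniqueness of the Lebesgue decomposition of a signed measure forces $g=I\cdot d_\sigma$ almost everywhere in $\Omega$ and $I\,\md\sigma_s=0$; the first identity of Step~2 then identifies $g=f(\cdot,\nabla u)$ a.e. As every subsequence has a further subsequence with the same limit, the whole original sequence converges weakly in $L^1(\Omega)$.

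The main obstacle is the applicability of Corollary~\ref{cor:suff-nec} to $-f$ at the boundary: it is precisely the null-Lagrangian-at-the-boundary hypothesis (rather than mere quasiconvexity at the boundary) that lets us upgrade \eqref{bd7B} to an equality, thereby excluding the boundary concentration of $f(\cdot,\nabla u_k)$ that genuinely occurs for merely quasiaffine integrands, as in the classical example \cite[Example 7.3]{BaMu91} for $\det\nabla u_k$.
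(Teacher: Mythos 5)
Your proposal is correct and follows essentially the same route as the paper: pass to a subsequence generating a DiPerna--Majda measure, use Lemma~\ref{lem:parpoly} to verify that $\pm f$ is admissible in Corollary~\ref{cor:suff-nec} so that \eqref{qc7B}--\eqref{bd7B} become equalities, deduce that there is no concentration at the sphere at infinity, and then identify the weak limit. The only difference is cosmetic: the paper first computes the weak$^*$ limit in $\rca(\bar\O)$ directly from the representation formula and then upgrades to weak $L^1$ via Lemma~\ref{equiintegrability}, whereas you first invoke Lemma~\ref{equiintegrability} to get weak $L^1$ relative compactness and then identify the limit via uniqueness of the Lebesgue decomposition; both orderings are sound, and your explicit check that the recession function $h_\infty=f_p$ is itself a (special) null Lagrangian at the boundary with $Q_{b,\varrho(x)}(\pm f_p(x,\cdot))(0)=0$ fills in a detail the paper leaves implicit.
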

The proof relies on the following auxiliary result, justifying that $h=f$ is admissible a test function in Corollary~\ref{cor:suff-nec}:
\begin{lemma}\label{lem:parpoly}
Let $p\geq 0$ and suppose that $f:\overline{\O}\times \R^{m\times n}\to \R$ is continuous, and for each $x\in \overline{\O}$,
$F\mapsto f(x,F)$ is a polynomial of degree at most $p$. Then
$$
	f_0:\overline{\O}\times \R^{m\times n}\to \R,\quad f_0(x,F):=\frac{f(x,F)}{1+\abs{F}^p}
$$
has a continuous extension to $\overline{\O}\times \coms$.
\end{lemma}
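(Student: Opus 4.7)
The plan is to decompose $f(x,\cdot)$ into its homogeneous parts and recognize each piece, divided by $1+|F|^p$, as an element of the ring $\mathcal{S}$ from \eqref{spherecomp} depending continuously on $x$. First I would set $d:=\lfloor p\rfloor$ and fix a \emph{unisolvent} configuration $G_1,\ldots,G_M\in\R^{m\times n}$ for the finite-dimensional space of polynomials of total degree at most $d$ in the entries of $F$, so that the evaluation map $P\mapsto (P(G_j))_{j=1}^{M}$ is a linear bijection onto $\R^M$. Then every coefficient $a_\alpha(x)$ in the expansion $f(x,F)=\sum_{|\alpha|\le d}a_\alpha(x)F^\alpha$ is a fixed linear combination of the values $f(x,G_j)$, and is therefore continuous in $x$ by the continuity of $f$. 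Writing $P_k(x,F):=\sum_{|\alpha|=k}a_\alpha(x)F^\alpha$ for the $k$-homogeneous part, the decomposition $f_0(x,F)=\sum_{k=0}^{d}P_k(x,F)/(1+|F|^p)$ is then jointly continuous on $\overline{\O}\times \R^{m\times n}$.

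Next, for each $k<p$ (strict inequality fails only when $p\in\N$ and $k=p$), I would use homogeneity to write, for $F\ne 0$,
$$
	\frac{P_k(x,F)}{1+|F|^p}=P_k\!\left(x,\tfrac{F}{|F|}\right)\frac{|F|^k}{1+|F|^p}.
$$
The factor $|F|^k/(1+|F|^p)$ vanishes as $|F|\to\infty$, so this summand, as a function of $F$ with $x$ fixed, lies in $C_0(\R^{m\times n})\subset\mathcal{S}$. In the remaining case $p\in\N$ and $d=p$, the top-order piece has the form
$$
	\frac{P_p(x,F)}{1+|F|^p}=P_p\!\left(x,\tfrac{F}{|F|}\right)\frac{|F|^p}{1+|F|^p},
$$
which fits precisely the third summand in \eqref{spherecomp} with $v_{0,1}(\theta):=P_p(x,\theta)\in C(S^{mn-1})$. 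Since $\mathcal{S}$ is closed under addition, $f_0(x,\cdot)\in\mathcal{S}$ for every $x\in\overline{\O}$, and therefore admits a continuous extension $\overline{f_0}(x,\cdot)\in C(\coms)$.

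Finally, to promote pointwise-in-$x$ continuous extension to joint continuity on $\overline{\O}\times\coms$, I would show that $x\mapsto f_0(x,\cdot)$ is continuous from $\overline{\O}$ into $(\mathcal{S},\|\cdot\|_\infty)$. For each $k\le d$ and every $F\ne 0$, homogeneity gives
$$
	\left|\frac{P_k(x_1,F)-P_k(x_2,F)}{1+|F|^p}\right|\le\frac{|F|^k}{1+|F|^p}\,\max_{\theta\in S^{mn-1}}\absb{P_k(x_1,\theta)-P_k(x_2,\theta)},
$$
and the rightmost maximum tends to $0$ as $x_2\to x_1$ by continuity of $P_k$ on the compact set $\overline{\O}\times S^{mn-1}$. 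Summing over $k$ yields the required continuity in $x$. Since $\R^{m\times n}$ is dense in $\coms$, the canonical map sending each element of $\mathcal{S}$ to its continuous extension on $\coms$ is a sup-norm isometry, so $x\mapsto\overline{f_0}(x,\cdot)$ is continuous from $\overline{\O}$ into $C(\coms)$, which, by compactness of $\coms$, implies joint continuity of $\overline{f_0}$ on $\overline{\O}\times\coms$. The main obstacle is conceptual: one has to recognize that the $\mathcal{S}$-structure is built precisely to separate the degree-$p$ homogeneous tail (via the rescaling $F\mapsto F/|F|$) from lower-order remainders that vanish at infinity; once the unisolvency argument, the only genuinely analytic input, provides continuous coefficients $a_\alpha$, the rest reduces to matching terms with \eqref{spherecomp}.
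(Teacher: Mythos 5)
Your proof is correct and follows the same overall structure as the paper's: decompose $f(x,\cdot)$ into its homogeneous parts, show each part is continuous in $(x,F)$, and then exploit homogeneity by factoring out $F/|F|$ to match the structure of $\mathcal S$ in \eqref{spherecomp}. The one genuinely different ingredient is your proof that the homogeneous parts $a_i(x,\cdot)$ (your $P_k$) depend continuously on $x$. You use a unisolvency argument: pick a finite evaluation set $G_1,\dots,G_M$ making $P\mapsto(P(G_j))_j$ a linear bijection on the degree-$\le d$ polynomial space, so each coefficient $a_\alpha(x)$ is a fixed linear combination of $f(x,G_j)$ and therefore continuous. The paper instead uses a self-contained inductive trick: it observes that $g(x,F):=2^d f(x,F)-f(x,2F)=\sum_{i=0}^{d-1}(2^d-2^i)a_i(x,F)$ is continuous and of lower degree, so by the induction hypothesis each $a_i$, $i\le d-1$, is continuous, and then $a_d=f-\sum_{i<d}a_i$ is too. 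Your route buys a cleaner conceptual statement (the coefficients are literally continuous linear functionals of $f(x,\cdot)$), while the paper's avoids invoking interpolation/unisolvency and stays entirely within the rescaling machinery it already uses. Your final paragraph, making the promotion from pointwise-in-$x$ extension to joint continuity on $\overline{\O}\times\coms$ fully explicit via sup-norm continuity of $x\mapsto f_0(x,\cdot)$, is a welcome precision the paper compresses into a single remark.
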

\begin{proof}
Let $d:=\max_{x\in \overline{\O}}d_f(x)$ denote the maximal degree of $f$.
For every $x$, we split
$$	
	f(x,F)=a_0(x,F)+a_1(x,F)+\ldots+a_d(x,F),
$$
where for each $i$, $a_i(x,\cdot)$ is a positively $i$-homogeneous polynomial.
We first claim that $a_i$ is continuous on $\overline{\O}\times \R^{m\times n}$ for each $i$, which we prove
by induction with respect to $d$.
If $d=0$, the continuity of $a_0=f$ is trivial.
If $d>1$, since $f$ is continuous, so is
$$
	g(x,F):=2^d f(x,F)-f(x,2F)=\sum_{i=0}^{d-1} (2^d-2^i)a_i(x,F),
$$
whose maximal degree is (at most) $d-1$. By assumption of the induction,
we obtain that $(2^d-2^i)a_i$ and thus $a_i$ is continuous for each $i=1,\ldots,d-1$. As a consequence, $a_d=f-\sum_{i=1}^{d-1}a_i$ is continuous as well.

Due to the preceding observation, we may now assume that $f=a_i$ for some $i$, i.e., $f$ is positively $i$-homogeneous in its second variable. It is enough to obtain an continuous extension of $f_0$ for $F$ outside a fixed ball. For any $F$ with $\abs{F}> 0$, we have that
$$
	f_0(x,F)=\frac{f(x,F)}{1+\abs{F}^p}=\frac{\abs{F}^i}{1+\abs{F}^p} f\Big(x,\frac{F}{\abs{F}}\Big).
$$
This clearly has a continuous extension to $\overline{\O}\times \coms$.
%
%the first factor has a fixed limit as $\abs{F}\to \infty$, and
%$f$ is uniformly continuous on $\overline{\O}\times S^{nm-1}$ (the latter denoting the unit sphere in $\R^{m\times n}$).
\end{proof}
\bigskip

\noindent
\begin{proof}[Proof of Theorem~\ref{th:weakcontinuityup}]
%Recall that by Theorem \ref{thm:bnulllag}, $f(x,\cdot)$ really is a polynomial (of degree $d_f(x)\leq \min(m,n)$ for $x\in\Omega$ %and $d_f(x)\leq \min(m,n-1)$ for $x\in\partial\Omega$).
Let $(\sigma,\hat{\nu})$ be the DiPerna-Majda measure generated by (a subsequence of) $(\nabla u_k)$. In particular,
\begin{align}\label{twcu-1}
	\int_\Omega \varphi(x) f(x,\nabla u_k)\,\d x\underset{k\to\infty}{\longrightarrow}
	\int_{\bar\O}\int_{\coms} \varphi(x)\frac{f(x,F)}{1+\abs{F}^p}\,\hat{\nu}_x(\d F)\,\sigma(\d x)
\end{align}
for every $\varphi \in C(\bar\O)$. By Lemma~\ref{lem:parpoly}, $h:=\pm f$ is admissible in
the conditions \eqref{qc7B},\eqref{rem7B} and \eqref{bd7B} in Corollary~\ref{cor:suff-nec}, which also means that all three inequalities actually are equalities.
By \eqref{rem7B} and \eqref{bd7B}, we obtain that
\begin{align}\label{twcu-2}
	\int_{\bar\O}\int_{\beta_{\cal S}\R^{m\times n}\setminus\R^{m\times n}} \frac{f(x,F)}{1+\abs{F}^p} \hat\nu_x(\md F)\sigma(\md x)=0.
\end{align}
Using this together with \eqref{qc7B}, the right hand side in \eqref{twcu-1} can be expressed as
$$
\begin{aligned}
		\int_{\bar\O}\int_{\coms} \varphi(x)\frac{f(x,F)}{1+\abs{F}^p}\,\hat{\nu}_x(\d F)\,\sigma(\d x)
		&=\int_{\bar\O}\int_{\R^{m\times n}} \varphi(x)\frac{f(x,F)}{1+\abs{F}^p}\,\hat{\nu}_x(\d F)\,\sigma(\d x) \\
		&=\int_{\bar\O}\int_{\R^{m\times n}} \varphi(x)\frac{f(x,F)}{1+\abs{F}^p}\,\hat{\nu}_x(\d F)\,\d_\sigma(x)\, \d x\\
		&=\int_\Omega \varphi(x) f(x,\nabla u)\,\d x
\end{aligned}
$$
Consequently, \eqref{twcu-1} implies that
$f(\cdot,\nabla u_k)\to f(\cdot,\nabla u)$ weakly* in $(C(\bar\O))'=\rca(\bar\O)$. Finally, if $f(x,\nabla u_k(x))\ge 0$ for almost all $x\in\O$ and all $k\in\N$, then
 $f(\cdot,\nabla u_k)\wto f(\cdot,\nabla u)$ in $L^1(\O)$ by Lemma~\ref{equiintegrability} and Remark~\ref{rem:equiintegrability}, using \eqref{twcu-2}.
\end{proof}

The following result evokes M\"{u}ller's  generalization  \cite{Mue90a} of Ball's result  \cite{Ball:77a}. In our setting, however, we can drop nonnegativity of the integrand.
The condition  $f(\cdot,\nabla u)\ge 0$ can be seen as a kind of ``orientation-preservation''. We refer to \cite{CiGoMa:11a} for elasticity of shells including a normal-orientation condition.

\bigskip

\begin{theorem}
Let $h:\O\times\R\to\R\cup\{+\infty\}$ be such that $h(\cdot, s)$ is measurable for all $s\in\R$ and  $h(x,\cdot)$ is  convex for almost all $x\in\O$.  Let $f$ and $d_f$  be as in Theorem~\ref{th:weakcontinuityup}. Then $I(u):=\int_\O h(x,f(x,\nabla u(x))\,\md x$ is weakly lower semicontinuous on the set $\{u\in W^{1,p}(\O;\R^m); f(\cdot,\nabla u)\ge 0 \mbox{ in }\O\}$.
\end{theorem}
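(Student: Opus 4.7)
The plan is to reduce the claim to two ingredients that are already available: the weak $L^1$ continuity statement of Theorem~\ref{th:weakcontinuityup}, and the classical weak lower semicontinuity of integral functionals with convex integrands on $L^1$. Suppose $\{u_k\}\subset\{u\in W^{1,p}(\O;\R^m);\, f(\cdot,\nabla u)\ge 0\}$ converges weakly to $u$ in $W^{1,p}(\O;\R^m)$. By extracting a subsequence (not relabeled), I may assume $\liminf_k I(u_k)=\lim_k I(u_k)$.

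The first step is to apply Theorem~\ref{th:weakcontinuityup} directly: the admissibility of $u_k$ supplies exactly the hypothesis $f(x,\nabla u_k(x))\ge 0$, so
\[
f(\cdot,\nabla u_k)\wto f(\cdot,\nabla u)\qquad\text{weakly in }L^1(\O).
\]
Since the weak $L^1$ limit of nonnegative functions is nonnegative a.e., this also confirms that the limit $u$ lies in the admissible set, so the statement $I(u)\le\liminf_k I(u_k)$ makes sense.

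The second step is a standard theorem: for a normal integrand $h:\O\times\R\to\R\cup\{+\infty\}$ which is convex in its second argument, the functional $v\mapsto\int_\O h(x,v(x))\,\md x$ is sequentially weakly lower semicontinuous on $L^1(\O)$. Applying this to $v_k:=f(\cdot,\nabla u_k)\wto f(\cdot,\nabla u)=:v$ yields
\[
I(u)=\int_\O h(x,v(x))\,\md x\ \le\ \liminf_{k\to\infty}\int_\O h(x,v_k(x))\,\md x\ =\ \liminf_{k\to\infty} I(u_k),
\]
which is the desired inequality.

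The only delicate point — and the step I expect to require the most care — is verifying the normal-integrand hypothesis for the convex lsc result. Measurability of $h(\cdot,s)$ is assumed outright, but convexity of $h(x,\cdot)$ only forces continuity on the interior of its effective domain; lower semicontinuity may fail at the (at most two) finite endpoints where $h(x,\cdot)$ takes a finite value. One handles this either by reading convexity as implicitly including lower semicontinuity in $s$, or by replacing $h$ with its lsc envelope in $s$ and checking that the resulting functional agrees with $I$ on the admissible set. Once this routine technicality is dispensed with, the theorem is a direct corollary of Theorem~\ref{th:weakcontinuityup}; no further use of the null-Lagrangian structure of $f$ is needed.
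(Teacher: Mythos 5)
Your proof is correct and is exactly the ``standard'' argument the paper alludes to: combine the weak $L^1$ continuity of $u\mapsto f(\cdot,\nabla u)$ from Theorem~\ref{th:weakcontinuityup} (applicable since the admissible set ensures $f(x,\nabla u_k)\ge 0$) with classical sequential weak lower semicontinuity of convex normal integrands on $L^1(\O)$. Your observation that the stated hypotheses on $h$ must be read so as to yield a normal integrand (e.g.\ interpreting convexity to include lower semicontinuity in $s$, or passing to the lsc envelope) is the one genuine technical point, and you have handled it appropriately.
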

\bigskip

{\it Proof.}
The proof is standard.
\hfill $\Box$

\section{Higher Integrability}

By a result of  S.~M\"uller \cite{Mue89a, Mue90a}, for any bounded sequence $(u_k\subset W^{1,n}(\Omega;\R^N)$ with $\det \nabla u_k\geq 0$ a.e.~in $\Omega$, $\det \nabla u_k$ is locally bounded in the class $L \log L$, i.e.,
$$
	\sup_k \int_K \gamma(\det \nabla u_k)\,\d x<\infty , \quad\text{with $\gamma(s):=s\ln^+ s$},
$$
for every $K\subset \subset \O$, with $\ln^+$ denoting the positive part of the logarithm. It is natural to ask whether an analogous result holds for a null Lagrangian at the boundary in place of the determinant, up to the boundary (i.e., with $\Omega$ instead of $K$). The example closest to  M\"uller's original result for the determinant is the function $\det'$, given by
$$
	\det{}' \xi:=\det (\xi_{ij})_{i,j=1,\ldots,n-1},\quad \text{for $\xi\in \R^{(n-1)\times n}$}.
$$
This is a null Lagrangian at the boundary, at every boundary point with the normal $\varrho=e_n$. A strict analogue for the estimates in \cite{Mue89a, Mue90a} in this case would be an inequality as follows:
\begin{eqnarray}\label{mulext}
	\sup_k \int_K \gamma(\det{}' \nabla u_k)\,\d x\leq C\Big(K, \| u\|_{W^{1,n-1}(\Omega;\R^{n-1})}\Big)
 \quad\text{with $\gamma(s):=s\ln^+ s$},
\end{eqnarray}
with a continuous function $C(K,\cdot)$, for every compact $K\subset \bar\O$ having a positive distance to the set $\{x\in\partial\Omega;\,\rho(x)\neq e_n\}$ where $\rho(x)$ denotes the outer unit normal to $\partial\O$ at $x$ . However, it seems that it is not possible to extend  M\"uller's proof to this case, at least not in a straightforward way, and the validity of \eqref{mulext} remains an open problem.

On the other hand, $\det{}' \nabla u_k$ only depends on the derivatives of $u_k$ with respect to first $n-1$ variables. The anisotropic space $L^{n-1}((0,\frac{1}{2});W^{1,n-1}((0,1)^{n-1};\R^{n-1}))$ suffices to ensure integrability of $\det{}' \nabla u_k$, which makes it a natural alternative to the isotropic space $W^{1,n-1}(\Omega;\R^{n-1})$ used above.
It turns out that the analogue of \eqref{mulext} with the anisotropic norm on the right hand side
fails to hold even in the interior, as illustrated by the example below, an extension of Counterexample 7.2 in \cite{Mue90a}.
More precisely, we show that  one cannot expect an inequality of the following form:
\begin{eqnarray}\label{mulext2}
	\sup_k \int_K \gamma(\det{}' \nabla u_k)\,\d x\leq C\Big(K, \| u\|_{L^{n-1}( (-1,1); W^{1,n-1}((0,1)^{n-1};\R^{n-1})}\Big)
 \quad\text{with $\gamma(s):=s\ln^+ s$},
\end{eqnarray}
with a continuous function $C(K,\cdot)$.
\begin{example}
For $n\geq 2$ consider
$$
	\det{}':\R^{(n-1)\times n}\to \R, \quad \det{}' \xi:=\det (\xi_{ij})_{i,j=1,\ldots,n-1},
$$
which is a boundary Null Lagrangian with respect to the normal $\varrho=\pm e_n$ (the $n$-th unit vector),
together with the sequence $(u_k)$ defined as
$$
	u_k(x',x_n):=g(x_n)h_k(\abs{x'})\frac{x'}{\abs{x'}},~~(x',x_n)\in Q:=(0,1)^{n-1}\times (-1,1),
$$
where
$$
	\begin{aligned}
		&g(t):=\left[|t| \ln^2 (|t|)\right]^{-\frac{1}{n-1}},\\
		%\left\{\begin{array}{ll}
		%\end{array}\right.,\\
		&h_k(r):=\left\{\begin{array}{ll}
		k (\ln k)^{-\frac{1}{n-1}} r&~~\text{if $r<\frac{1}{k}$,}\\
		(\ln k)^{-\frac{1}{n-1}}&~~\text{else.}
		\end{array}\right.\\
	\end{aligned}
$$
In this case, one can check (cf.~\cite{Mue90a}) that  $\det{}' (\nabla u_k)=g(x_n)^{n-1}k^{n-1}(\ln k)^{-1}$ for $\abs{x'}<\frac{1}{k}$ and $\det{}' (\nabla u_k)=0$ elsewhere. In particular $\det{}' (\nabla u_k)\geq 0$ a.e.~in $Q$, for every $k$. In addition,
$(u_k)\subset L^{n-1}((0,1);W^{1,n-1}((0,1)^{n-1};\R^{n-1}))$ is bounded, i.e.,
$$
	\sup_k \int_Q \abs{\nabla' u_k}^{n-1}\,\d x
	%=\sup_k \frac{1}{\ln 2}\int_{(0,1)^{n-1}} \abs{\nabla' u_k}^{n-1}\,\d x'
	<\infty,
$$
where $\nabla'$ denotes the gradient with respect to the first $n-1$ variables. But
for every $k$, the leading term in $\int_{(0,1)^{n-1}} \gamma(\det{}' \nabla u_k(x',x_n))\,\d x'$ for $x_n$ near zero is of the form
$$
	\frac{-1}{\abs{x_n} \ln \left(\abs{x_n}\right)\ln k},
$$
which is not integrable near $x_n=0$, and consequently,
$$
	\int_{K_\eps} \gamma(\det{}' \nabla u_k)\,\d x=+\infty\quad
	\text{with $\gamma(s):=s\ln^+ s$ and $K_\eps:=[\eps,1-\eps]^{n-1}\times [0,\eps]$}.
$$
Hence inequality \eqref{mulext2} cannot hold.
\end{example}
%\begin{remark}
%Clearly, $\det{}' \nabla u_k$ only depends on the derivatives of $u_k$ with respect to first $n-1$ variables. The anisotropic %space $L^{n-1}((0,\frac{1}{2});W^{1,n-1}((0,1)^{n-1};\R^{n-1}))$ suffices to ensure integrability of $\det{}' \nabla u_k$, which %makes it a natural choice. We do not know if it is possible to find another sequence with the same properties as outlined in the %example, which is even bounded in
%$W^{1,n-1}(Q;\R^{n-1})$. As a matter of fact, this cannot be achieved by simply truncating the singularity of  $g(x_n)$ near %$x_n=0$ at a $k$-dependent level.
%\end{remark}

\bigskip

{\bf Acknowledgment:} The work was conducted during repeated mutual visits of the authors at the Universities of Cologne and Warsaw and at the Institute of Information Theory and Automation in Prague. The hospitality and support of all these institutions is gratefully acknowledged.

%{\bf Acknowledgment:} The work was conducted during repeated stays of MK at the Universities of Cologne and Warsaw and several %stays  of AK and SK in the Institute of Information Theory and Automation in Prague.  The hospitality and support of all these  %institutions is gratefully acknowledged. \red{ I- A.K. was also in Cologne}

%\bibliographystyle{plain}
%\bibliography{lagrangian}

\begin{thebibliography}{10}

\bibitem{ab}
J.J. Alibert and G.~Bouchitt\'e.
\newblock {Non-uniform integrability and generalized Young measures.}
\newblock {\em J. Convex Anal.}, 4(1):129--147, 1997.

\bibitem{Ba89a}
J.~M. Ball.
\newblock A version of the fundamental theorem for young measures.
\newblock In M.~Rascle, D.~Serre, and M.~Slemrod, editors, {\em PDEs and
  continuum models of phase transitions. Proceedings of an NSF-CNRS joint
  seminar held in Nice, France, January 18-22, 1988}, volume 344 of {\em
  Lect.~Notes Phys.}, pages 207--215, Berlin etc., 1989. Springer.

\bibitem{BaMu91}
J.~M. Ball and F.~Murat.
\newblock {$W^{1,p}$}-quasiconvexity and variational problems for multiple
  integrals.
\newblock {\em J. Funct. Anal.}, 58(3):225--253, 1984.

\bibitem{BaMa84a}
J.M. Ball and J.E. Marsden.
\newblock Quasiconvexity at the boundary, positivity of the second variation
  and elastic stability.
\newblock {\em Arch. Ration. Mech. Anal.}, 86:251--277, 1984.

\bibitem{Ball:77a}
John~M. Ball.
\newblock {Convexity conditions and existence theorems in nonlinear
  elasticity.}
\newblock {\em Arch. Ration. Mech. Anal.}, 63:337--403, 1977.

\bibitem{CiGoMa:11a}
Philippe~G. Ciarlet, Radu Gogu, and Cristinel Mardare.
\newblock {A notion of polyconvex function on a surface suggested by nonlinear
  shell theory.}
\newblock {\em C. R., Math., Acad. Sci. Paris}, 349(21-22):1207--1211, 2011.

\bibitem{Da89B}
Bernard Dacorogna.
\newblock {\em {Direct methods in the calculus of variations. 2nd ed.}}
\newblock {Applied Mathematical Sciences 78. Berlin: Springer}, 2008.

\bibitem{DiPeMaj87a}
Ronald~J. DiPerna and Andrew~J. Majda.
\newblock Oscillations and concentrations in weak solutions of the
  incompressible fluid equations.
\newblock {\em Comm. Math. Phys.}, 108(4):667--689, 1987.

\bibitem{d-s}
Nelson Dunford and Jacob~T. Schwartz.
\newblock {\em {Linear operators. Part I: General theory. With the assistance
  of William G. Bade and Robert G. Bartle. Repr. of the orig., publ. 1959 by
  John Wiley \&amp; Sons Ltd., Paperback ed.}}
\newblock {Wiley Classics Library. New York etc.: John Wiley \&amp; Sons Ltd.},
  1988.

\bibitem{engelking}
Ryszard Engelking.
\newblock {\em {General topology. Rev. and compl. ed.}}
\newblock {Sigma Series in Pure Mathematics, 6. Berlin: Heldermann Verlag.
  viii, 529 p. DM 148.00 }, 1989.

\bibitem{fonseca}
Irene Fonseca.
\newblock {Lower semicontinuity of surface energies.}
\newblock {\em Proc. R. Soc. Edinb., Sect. A}, 120(1-2):99--115, 1992.

\bibitem{ifmk}
Irene Fonseca and Martin {Kru\v{z}{\'\i}k}.
\newblock {Oscillations and concentrations generated by $\mathcal{A}$-free
  mappings and weak lower semicontinuity of integral functionals.}
\newblock {\em ESAIM, Control Optim. Calc. Var.}, 16(2):472--502, 2010.

\bibitem{fmp}
Irene Fonseca, Stefan {M\"{u}ller}, and Pablo Pedregal.
\newblock {Analysis of concentration and oscillation effects generated by
  gradients.}
\newblock {\em SIAM J. Math. Anal.}, 29(3):736--756, 1998.

\bibitem{GrMe07a}
Yury Grabovsky and Tadele Mengesha.
\newblock Direct approach to the problem of strong local minima in calculus of
  variations.
\newblock {\em Calc. Var. Partial Differential Equations}, 29(1):59--83, 2007.

\bibitem{GrMe08a}
Yury Grabovsky and Tadele Mengesha.
\newblock Erratum: ``{D}irect approach to the problem of strong local minima in
  calculus of variations'' [{C}alc. {V}ar. {P}artial {D}ifferential {E}quations
  {\bf 29} (2007), no. 1, 59--83; mr2305477].
\newblock {\em Calc. Var. Partial Differential Equations}, 32(3):407--409,
  2008.

\bibitem{Ka97a}
Agnieszka Ka{\l}amajska.
\newblock On lower semicontinuity of multiple integrals.
\newblock {\em Colloq. Math.}, 74(1):71--78, 1997.

\bibitem{KaKru08a}
Agnieszka Ka{\l}amajska and Martin {Kru{\v{z}\'\i}k}.
\newblock Oscillations and concentrations in sequences of gradients.
\newblock {\em ESAIM, Control Optim. Calc. Var.}, 14(1):71--104, 2008.

\bibitem{k-p}
David Kinderlehrer and Pablo Pedregal.
\newblock {Characterizations of Young measures generated by gradients.}
\newblock {\em Arch. Ration. Mech. Anal.}, 115(4):329--365, 1991.

\bibitem{k-p1}
David Kinderlehrer and Pablo Pedregal.
\newblock {Gradient Young measures generated by sequences in Sobolev spaces.}
\newblock {\em J. Geom. Anal.}, 4(1):59--90, 1994.

\bibitem{kristensen-rindler}
Jan Kristensen and Filip Rindler.
\newblock {Characterization of generalized gradient Young measures generated by
  sequences in $W^{1,1}$ and BV.}
\newblock {\em Arch. Ration. Mech. Anal.}, 197(2):539--598, 2010.

\bibitem{Kru10a}
Martin {Kru{\v{z}\'\i}k}.
\newblock Quasiconvexity at the boundary and concentration effects generated by
  gradients.
\newblock {\em ESAIM, Control Optim.~Calc.~Var.}, 2013.
\newblock To appear.

\bibitem{KruRou97a}
Martin Kru\v{z}\'{\i}k and {Tom\'{a}\v{s}} Roub\'{\i}\v{c}ek.
\newblock On the measures of {DiPerna} and {Majda}.
\newblock {\em Math. Bohem.}, 122(4):383--399, 1997.

\bibitem{kujofu}
Alois Kufner, Old\v{r}ich John, and Svatopluk Fu\v{c}\'{\i}k.
\newblock {\em {Function spaces.}}
\newblock {Monographs and Textsbooks on Mechanics of Solids and Fluids.
  Mechanics: Analysis. Leyden: Noordhoff International Publishing.}, 1977.

\bibitem{MiSp98}
Alexander Mielke and Pius Sprenger.
\newblock Quasiconvexity at the boundary and a simple variational formulation
  of {A}gmon's condition.
\newblock {\em J. Elasticity}, 51(1):23--41, 1998.

\bibitem{Mo52a}
Charles~B. Morrey.
\newblock Quasi-convexity and the lower semicontinuity of multiple integrals.
\newblock {\em Pac. J. Math.}, 2:25--53, 1952.

\bibitem{Mue89a}
Stefan M{\"{u}}ller.
\newblock {A surprising higher integrability property of mappings with positive
  determinant.}
\newblock {\em Bull. Am. Math. Soc., New Ser.}, 21(2):245--248, 1989.

\bibitem{Mue90a}
Stefan M{\"u}ller.
\newblock Higher integrability of determinants and weak convergence in {$L^1$}.
\newblock {\em J. Reine Angew. Math.}, 412:20--34, 1990.

\bibitem{pedregal}
Pablo Pedregal.
\newblock {\em {Parametrized measures and variational principles.}}
\newblock {Progress in Nonlinear Differential Equations and their Applications.
  30. Basel: Birkh\"auser.}, 1997.

\bibitem{r}
Tom\'a\v{s} Roub\'{\i}\v{c}ek.
\newblock {\em {Relaxation in optimization theory and variational calculus.}}
\newblock {de Gruyter Series in Nonlinear Analysis and Applications. 4. Berlin:
  Walter de Gruyter. }, 1997.

\bibitem{Sil2011}
M.~{\v{S}}ilhav{\'y}.
\newblock Equilibrium of phases with interfacial energy: a variational
  approach.
\newblock {\em J. Elasticity}, 105(1-2):271--303, 2011.

\bibitem{Sil97B}
Miroslav {\v{S}ilhav\'y}.
\newblock {\em The mechanics and thermodynamics of continuous media}.
\newblock Texts and Monographs in Physics. Springer, Berlin, 1997.

\bibitem{Spre96B}
Pius Sprenger.
\newblock {\em {Quasikonvexit\"at am Rande und Null-Lagrange-Funktionen in der
  nichtkonvexen Variationsrechnung}}.
\newblock PhD thesis, {Universit\"at Hannover}, 1996.

\bibitem{tartar1}
Luc Tartar.
\newblock {Mathematical tools for studying oscillations and concentrations:
  From Young measures to $H$-measures and their variants.}
\newblock {Antoni\'c, Nenad (ed.) et al., Multiscale problems in science and
  technology. Challenges to mathematical analysis and perspectives. Proceedings
  of the conference on multiscale problems in science and technology,
  Dubrovnik, Croatia, September 3-9, 2000. Berlin: Springer. 1-84}, 2002.

\bibitem{y}
L.C. Young.
\newblock {Generalized curves and the existence of an attained absolute minimum
  in the calculus of variations.}
\newblock {\em C. R. Soc. Sci. Varsovie}, 30:212--234, 1937.

\end{thebibliography}

\baselineskip=12pt

\end{sloppypar}

\end{document}